\renewcommand{\leq}{\leqslant}
\renewcommand{\geq}{\geqslant}
\def\build#1_#2^#3{\mathrel{
\mathop{\kern 0pt#1}\limits_{#2}^{#3}}}
\newcommand{\E}{\mathbb{E}}
\theoremstyle{plain}
\newtheorem{theorem}{Theorem}
\newtheorem{corollary}{Corollary}
\newtheorem{proposition}[corollary]{Proposition}
\newtheorem{lemma}{Lemma}
\theoremstyle{definition}
\theoremstyle{remark}
\newcommand{\Z}{{\mathbb{Z}}}
\newcommand{\N}{{\mathbb{N}}}
\newcommand{\V}{{\mathbb{V}}}
\newcommand{\T}{{\mathbb{T}}}
\begin{document}
\title{The Constrained-degree percolation model}
\author{B.N.B. de Lima\footnote{Departamento de Matem{\'a}tica, Universidade Federal de Minas Gerais, Av. Ant\^onio
Carlos 6627 C.P. 702 CEP 30123-970 Belo Horizonte-MG, Brazil}, R. Sanchis$^*$, D.C. dos Santos$^*$,\\
 V. Sidoravicius\footnote{NYU-Shanghai, 1555 Century Av., Pudong Shanghai, CN 200122, China}, R. Teodoro\footnote{Departamento de Matem{\'a}tica, Universidade Federal do Rio Grande do Norte, Av. Senador Salgado Filho, s/nº
Lagoa Nova, CEP 59078-970 Natal-RN, Brazil}}

\date{}
\maketitle


\begin{abstract}
In the Constrained-degree percolation model on a graph $(\V,\E)$ there are a sequence, $(U_e)_{e\in\E}$, of i.i.d. random variables with distribution $U[0,1]$ and a positive integer $k$. Each bond $e$ tries to open at time $U_e$, it succeeds if both its end-vertices would have degrees at most $k-1$. We prove a phase transition theorem for this model on the square lattice $\mathbb{L}^2$, as well as on the d-ary regular tree. We also prove that on the square lattice the infinite cluster is unique in the supercritical phase.
\end{abstract}
{\footnotesize Keywords: phase transition; constrained degree percolation; uniqueness \\
MSC numbers:  60K35, 82B43}

\section{Introduction and main results}

 Let $G=(\V,\E)$ be an infinite, connected and bounded degree graph, that is, $\sup_{v\in\V} deg(v)<+\infty$ where $deg(v)=\#\{u\in\V;\langle v,u\rangle\in\E\}$. Let $(k_v)_{v\in\V}$ be a sequence of integers such that $k_v\leq deg(v),\forall v$ and $(U_e)_{e\in\E}$ be a sequence of {\em i.i.d.} random variables with uniform distribution in [0,1]. Define a continuous-time percolation model where at time $t=0$ all edges are closed and each edge $e=\langle v_1,v_2\rangle$ opens at time $U_e$ if $\#\{u\in\V;\langle v_i,u\rangle\mbox{ is open at time }U_e\}< k_{v_i}, i=1,2$. That is, at the random time $U_e$ the bond $e$ tries to open but it succeeds if both its endpoints belong to an open cluster where each vertex $v$ has maximum degree $k_v$. This model was introduced  in \cite{Te} and it is a simplified version of the {\em router model} created by I. Benjamini \cite{Be}. 

Variations of constrained percolation models have been studied and have relations with some statistical physics models on lattices, like the dimer model. Other types of constrained percolation models on the square lattice, where only specific configurations are allowed on the vertices, are studied in \cite{GMM} and \cite{HL}. 

Like in the model treated here, the papers \cite{GJ} and \cite{GL} also consider constrained percolation models with restrictions on the degree of each vertex, in the former the Erd\"os-R\'enyi random graph $G_{n,p}$ is studied under the conditioned on the event that all vertex degrees belong to some subset, ${\cal S}$, of the nonnegative integers; in the last paper the so-called 1-2 Model is studied, a statistical mechanics model on the hexagonal lattice where the degree of each vertex must be 1 or 2.

For a more formal description of the Constrained-degree percolation model, let $([0,1]^\E, {\cal F},P)$ be the probability space where ${\cal F}$ is the $\sigma$-algebra generated by cylinder sets of $[0,1]^\E$ and $P$ is the product of Lebesgue measures in $[0,1]$. Given a sequence $\kappa=(k_v)$ of degree restrictions and uniform random times $U=(U_e)$, we will denote by $\omega_t^{G,\kappa}(U)$ the temporal-configuration of open (or 1) and closed (or 0) bonds in $\{0,1\}^\E$ at time $t$; the status of the bond $e=\langle u_1,u_2\rangle$ in this configuration is denoted by $\omega_{t,e}^{G,\kappa}(U)$. Observe that the random variable $\omega_{t,e}^{G,\kappa}(U)$ is the product of the indicator functions of the events:

$$\{U\in[0,1]^\E;U_e<t\}$$  and  $$\{U\in[0,1]^\E;\# \{v\in\V/\{u_{3-i}\}; \omega_{U_e,\langle u_i,v\rangle }^{G,\kappa}(U)=1\}<k_{u_i}\},\ i=1,2.$$

Using the Harris construction it is straightforward to show that this model is well-defined for all $t\in[0,1]$.

As usual in percolation, given a temporal-configuration $\omega_t^{G,\kappa}(U)$, the notation $0\leftrightarrow\infty$ in $\omega_t^{G,\kappa}(U)$ means that there are infinitely many vertices connected to origin by paths of open edges at time $t$. We commit some abuse of notation denoting $\{U\in[0,1]^\E;0\leftrightarrow\infty\mbox{ in }\omega_t^{G,\kappa}(U)\}$ by $\{0\leftrightarrow\infty\mbox{ at }t\}$.

We define the probability of percolation function, $\theta^{G,\kappa}(t):[0,1]\rightarrow [0,1]$ as $\theta^{G,\kappa}(t)=P\{0\leftrightarrow\infty\mbox{ at }t\}$ and the critical time $t_c(G,\kappa)=\sup\{t\in[0,1];\theta^{G,\kappa}(t)=0\}$.

From now on, when clear from the context we will drop the superindices $G$ and $\kappa$ in the notation.

Let the graph $G$ be the square lattice $\mathbb{L}^2$ and consider the degree restriction $k_v=3,\forall v\in\Z^2$. The theorem below characterizes a phase transition for the Constrained-degree percolation model on this graph.

\begin{theorem}\label{sq}For the Constrained-degree percolation model on the square lattice, it holds that $\frac{1}{2}<t_c(\mathbb{L}^2,(3))<1$.
\end{theorem}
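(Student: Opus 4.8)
\emph{Lower bound.} The easy half is $t_c\ge 1/2$. Using the same randomness $(U_e)$, every edge that is constrained-open at time $t$ has $U_e<t$, so edge-by-edge the configuration $\omega_t$ is contained in the Bernoulli bond configuration $\{e:U_e<t\}$, which has parameter $t$; since $p_c(\mathbb{L}^2)=1/2$ for Bernoulli bond percolation, $\theta^{(3)}(t)=0$ for $t<1/2$, hence $t_c\ge 1/2$. To upgrade to the strict inequality I would run a renormalization argument at a time $t=1/2+\varepsilon$: fix a scale $L$ comparable to the Bernoulli correlation length at parameter $1/2+\varepsilon$, and show that the probability of a constrained-open left--right crossing of a wide $3L\times L$ rectangle is strictly below the Bernoulli crossing probability at the same parameter, the deficit coming from the fact that along any near-critical crossing many vertices are driven to degree $3$, so that the ``pivotal'' edges needed to close or to chain crossings are blocked; feeding such a crossing-probability deficit into a standard finite-size-criterion / block-rescaling scheme yields the absence of an infinite cluster (indeed exponential decay of connectivities) at $t=1/2+\varepsilon$, hence $t_c>1/2$. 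The reason this cannot be done cheaply is that there is no clean ``overset'' description of $\omega_t$: whether an edge is constrained-open depends, through the recursion in the Harris construction, on arbitrarily distant $U_f$'s, so one genuinely must control crossing probabilities at finite scales rather than stochastically dominate $\omega_t$ by a subcritical product measure.

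\emph{Upper bound.} Here the dependence problem is removed by passing to a clean sub-configuration of $\omega_t$. Call an edge $e=\langle u,v\rangle$ \emph{super-open} at time $t$ if $U_e<t$ and $e$ is not the $U$-maximal edge at $u$ nor at $v$. Every super-open edge is constrained-open: if $e$ is not $U$-maximal at $u$, then at most two of the four edges at $u$ have $U$-value $<U_e$, so the degree of $u$ just before time $U_e$ (which only counts such edges) is at most $2$; the same holds at $v$; hence when $e$ tries to open at time $U_e$ both endpoints have degree $\le 2<3$ and it succeeds, a conclusion that uses nothing about the rest of the lattice. Thus $\omega_t$ contains the super-open subgraph, whose edge-marginal, conditioning on $U_e=x$ so that the two ``not maximal'' events become independent, equals $\int_0^t(1-x^3)^2\,dx$, which increases to $\int_0^1(1-x^3)^2\,dx=9/14>1/2$ as $t\uparrow 1$. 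It remains to show the super-open subgraph percolates for $t$ close to $1$. This is a finite-range-dependent bond model---whether $e$ is super-open depends only on the $U_f$ for the edges $f$ sharing an endpoint with $e$---of density exceeding $1/2$, and I would obtain percolation by renormalization: declare an $L$-block \emph{good} if it contains super-open crossings in both directions meeting in its interior and reaching the relevant boundary faces; the good blocks form a $1$-dependent site process whose density can be made as close to $1$ as one wishes by taking $L$ large and $t$ close to $1$, so a Liggett--Schonmann--Stacey type comparison forces percolation of good blocks and hence of the super-open, a fortiori of the constrained, cluster. This gives $\theta^{(3)}(t)>0$ for $t$ near $1$, i.e.\ $t_c<1$; note that the reduction to super-open also sidesteps the fact that $t\mapsto\theta^{(3)}(t)$ need not be left-continuous at $1$.

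\emph{Main obstacle.} The delicate step is the strict lower bound $t_c>1/2$: one must prove that the degree constraint genuinely pushes the threshold off the Bernoulli value $1/2$, and the long-range dependence inherent in the Harris construction forces a finite-scale renormalization argument in place of a one-line stochastic domination. The percolation of the super-open model for $t$ near $1$ is a secondary technical point, handled by standard finite-range-dependent percolation machinery once its density has been pushed above $1/2$.
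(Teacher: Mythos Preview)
Your proposal contains a genuine gap in each half.

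\textbf{Lower bound.} You correctly get $t_c\ge 1/2$ by domination, but your strict inequality is only a heuristic: you never exhibit a mechanism that produces the claimed crossing-probability deficit, and ``many vertices are driven to degree $3$'' is not an argument. More importantly, your diagnosis that ``there is no clean overset description of $\omega_t$'' is wrong, and the paper's proof exploits exactly such a description. The paper builds an explicit intermediate configuration $\hat\omega_t$ with $\omega_t\le\hat\omega_t\le\tilde\omega_t$ (the last being Bernoulli$(t)$): on a periodic family of $4\times 3$ boxes one singles out a distinguished interior edge $g_{m,n}$ and closes it whenever the uniform clocks on the box satisfy a certain order event $C_{m,n}$ (chosen so that in the constrained model $g_{m,n}$ is necessarily blocked). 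This $\hat\omega_t$ is an \emph{essential diminishment} of Bernoulli percolation in the Aizenman--Grimmett sense, so its critical time is strictly above $1/2$, and since $\hat\omega_t\ge\omega_t$ one gets $t_c>1/2$. The whole point is that a clean overset does exist once you look at order events rather than at single-edge marginals.

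\textbf{Upper bound.} Your super-open construction is correct and pleasant: the implication ``super-open $\Rightarrow$ constrained-open'' is valid, and the marginal computation $\int_0^1(1-x^3)^2\,dx=9/14$ is right. The gap is the step ``good $L$-blocks have density arbitrarily close to $1$ for $L$ large and $t$ close to $1$''. That statement is equivalent to saying that the super-open process at $t=1$ is supercritical with crossing probabilities tending to $1$, which is precisely what you are trying to prove; you have given no independent reason for it. A marginal of $9/14$ in a finite-range dependent model does \emph{not} feed into Liggett--Schonmann--Stacey (whose thresholds are far above $9/14$), and you have not established positive association, self-duality, or any other structural property that would let RSW-type arguments run. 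The paper bypasses this by working directly with the constrained model via a refined Peierls argument: it classifies the edges of a dual circuit into interior edges, corner edges, and ``anomalous'' triples, proves edge-wise bounds of the form $P(e^*\text{ closed})\le \tfrac{2}{3}+\epsilon(t)$, $(p+\epsilon(t))$ and $\epsilon(t)$ respectively (with $p=(439/18144)^{1/4}$), and matches these against a combinatorial count of circuits with prescribed numbers of sides and unit sides. The bound $2/3+\epsilon$ alone would not beat the entropy of dual circuits; it is the interplay with the corner and anomaly bounds and the refined counting that makes the Peierls sum converge for some $t<1$. If you want to rescue the super-open route, you would need an actual proof that the super-open subgraph percolates --- e.g.\ its own Peierls estimate --- and the marginal $5/14>1/3$ for its dual shows this is not a one-line matter either.
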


Part of the proof of Theorem \ref{sq} is inspired by \cite{Te}, where a weaker version of this theorem was obtained. It was shown that there is percolation at time $t=1$, $P\ a.s.$

The proposition below shows that there is no percolation when $k_v=2,\forall v\in\Z^d$, thus the degree's restriction $k_v=3$ is optimal in Theorem \ref{sq}.

\begin{proposition}\label{k2}For the Constrained-degree percolation model on the hypercubic lattice $\mathbb{L}^d,\ d\geq 2$, it holds that $\theta^{\mathbb{L}^d,(2)}(t)=0,\ \forall t\in[0,1].$
\end{proposition}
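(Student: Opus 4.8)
\section*{Proof proposal for Proposition~\ref{k2}}

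The plan is to prove that $\theta^{\mathbb{L}^d,(2)}(1)=0$. Since the status of a bond, once decided at its attempt time $U_e$, never changes afterwards, $t\mapsto\omega^{\mathbb{L}^d,(2)}_{t,e}(U)$ is non-decreasing for every $e$; hence $\theta^{\mathbb{L}^d,(2)}$ is non-decreasing and the case $t=1$ yields the full statement for all $t$. At time $1$ every bond has made its attempt, and by the definition of the model a bond incident to a vertex $v$ opens only when $v$ has degree $<2$, so no vertex ever reaches degree $3$; thus in the final configuration $\omega_1$ every vertex has degree at most $2$, and the open cluster of the origin is a single vertex, a (finite, semi-infinite or bi-infinite) self-avoiding path, or a cycle. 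In particular $\{0\leftrightarrow\infty\text{ at }1\}$ is contained, for every $n$, in the event that there is an open self-avoiding path $\gamma=(0=v_0,v_1,\dots,v_n)$ of length $n$ from the origin, so $\theta^{\mathbb{L}^d,(2)}(1)\le\sum_{\gamma}P(A_\gamma)$, the sum being over such $\gamma$ and $A_\gamma$ being the event that all the bonds $e_i=\langle v_{i-1},v_i\rangle$ are open at time $1$.

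The heart of the argument is an exponential bound $P(A_\gamma)\le\rho^{\,n}$ with $\rho$ small enough that $(2d-1)\rho<1$. The mechanism is that, because each vertex has degree at most $2$, on $A_\gamma$ the two open bonds at every interior vertex $v_i$ ($1\le i\le n-1$) are exactly $e_i$ and $e_{i+1}$; equivalently, each of the remaining $2d-2$ bonds incident to $v_i$ must fail its opening attempt. Quantitatively I would reveal the variables $(U_e)$ in increasing order (i.e.\ run the process forward in time) and, for each interior $v_i$, examine the first bond incident to $v_i$ that actually opens: at that instant $v_i$ has degree $0$, the not-yet-tried bonds at $v_i$ are in uniformly random relative order, and on $A_\gamma$ this first opening bond must be $e_i$ or $e_{i+1}$ — any earlier-tried bond incident to $v_i$ is one of the $2d-2$ forbidden bonds, and for it not to open its far endpoint must already be saturated, which at such an early time is itself an event of small probability. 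Bookkeeping this race gives, at each $v_i$, a probability at most $c/d$ of being compatible with $A_\gamma$; multiplying along $i=1,\dots,n-1$ yields $P(A_\gamma)\le (c/d)^{\,n-1}$, and as there are at most $2d(2d-1)^{n-1}$ self-avoiding paths of length $n$, $\sum_\gamma P(A_\gamma)\to0$ once $2c<1$. For $d=2$, where these crude constants are borderline, I would gain the needed slack either from $P(\omega_{1,e}=1)<\tfrac12$ strictly (which holds because with positive probability the origin has final degree at most $1$, so $\E[\mathrm{deg}_{\omega_1}(0)]<2$) or by weighting the interior vertices by whether $\gamma$ turns there — a turn forces the open pair at $v_i$ to be non-collinear, and there are $\asymp d^2$ such pairs rather than $\asymp d$ collinear ones — matched against the refined count of paths with a prescribed number of turns.

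The step I expect to be the main obstacle is exactly the passage from the per-vertex estimate to its product along $\gamma$: the events ``$\omega_1$ restricted to $v_i$ equals $\{e_i,e_{i+1}\}$'' at different interior vertices are not independent, since the incident bond sets are disjoint only for vertices at lattice distance at least $3$, and even then the opening of a far bond can influence a given vertex through a chain of saturations. I would handle this by phrasing everything with respect to the filtration generated by revealing $(U_e)$ in increasing order, checking that ``the first opening incident to $v_i$'' occurs at a stopping time and that the relevant conditional probability is $\le c/d$ on the event that the exploration so far is still consistent with $A_\gamma$, so that the product bound follows by iterated conditioning (with a choice of $c$ uniform in $d\ge2$ satisfying $2c<1$). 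A minor additional point is that a wiggly $\gamma$ may have a bond incident to two of its interior vertices (a short return), which I would absorb by restricting the sum to paths without such returns or by a harmless overcount. Combining these, $\sum_\gamma P(A_\gamma)\to0$, so $\theta^{\mathbb{L}^d,(2)}(1)=0$, and by the monotonicity noted at the outset $\theta^{\mathbb{L}^d,(2)}(t)=0$ for all $t\in[0,1]$ and all $d\ge2$.
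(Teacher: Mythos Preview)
Your route is genuinely different from the paper's, and as written it does not close. The paper gives a three-line argument with no path counting at all: set $X_n=\#\{v\in\partial B_n:0\leftrightarrow v\text{ at }t=1\}$ and $\mathcal{F}_n=\sigma(I_v:v\in B_n)$; the restriction $k_v=2$ forces $E[X_{n+1}\mid\mathcal{F}_n]\le X_n$, so $(X_n)$ is a non-negative integer-valued supermartingale and hence converges a.s., giving $\theta^{\mathbb{L}^d,(2)}(1)=0$. No estimate on $P(\omega_{1,e}=1)$, no race against the connective constant, and nothing special about $d=2$.

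The concrete gap in your scheme is arithmetical before it is structural. The per-vertex event you isolate, ``the first bond at $v_i$ that actually opens is $e_i$ or $e_{i+1}$'', has probability \emph{equal to} $P(\deg v\ge1)/d$ by symmetry (the first-open edge is uniform over the $2d$ incident edges), hence is essentially $1/d$; in your notation this is $c\approx1$, and the requirement $(2d-1)\,c/d<1$, i.e.\ $c<d/(2d-1)\le\tfrac23$, fails for \emph{every} $d\ge2$, not just $d=2$. To beat $(2d-1)^{n-1}$ you would have to use the full event ``the open set at $v_i$ is exactly $\{e_i,e_{i+1}\}$'', whose marginal probability $\le 1/\binom{2d}{2}$ is small enough; but then consecutive events share the edge $e_{i+1}$, the stopping times ``first opening at $v_i$'' and ``first opening at $v_{i+1}$'' are interleaved, and the conditional bound you need (uniformly $\le c/d$ with $c<\tfrac12$ given the exploration so far) is not something your sketch actually establishes. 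The fixes you propose for $d=2$ (strict inequality $P(\omega_{1,e}=1)<\tfrac12$, or weighting by turns) attack the wrong shortfall: the first-moment bound is off by a factor $\sim2$ per step in every dimension, not by an $\epsilon$.
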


\begin{proof}

It is enough to consider the time $t=1$. Let $B_n=\{v\in\Z^d; \|v\|_1\leq n\}$ be the ball of radius $n$ centered in 0 and $\partial B_n=\{v\in\Z^d; \|v\|_1= n\}$. Define the stochastic processes $(X_n)_n$, where $X_n=\#\{v\in\partial B_n;0\leftrightarrow v\mbox{ at }t=1\}$, and the filtration  $({\cal F}_n)_n$, where ${\cal F}_n=\sigma(I_v,\ v\in B_n)$ with $I_v=I_{(0\leftrightarrow v\mbox{ at }t=1)}$. The restriction $k_v=2,\forall v\in\Z^d$ implies that $E[X_{n+1}|{\cal F}_n]\leq X_n$. That is, $(X_n,{\cal F}_n)$ is a non-negative supermartingale, as $X_n$ is an integer number it follows that $X_n\rightarrow 0$ {\em a.s.}. This concludes that $\theta^{\mathbb{L}^d,(2)}(1)=0$
\end{proof}

It is natural to ask if $t_c(\mathbb{L}^d,(k))<1$ for some $k=k(d)<2d$. We haven't any answer for this question. One interesting feature of this model is that there is no obvious monotonicity for the function $\theta^{\mathbb{L}^d,k}(t)$ in $d$. 

The proof of Theorem \ref{sq} is done in Section 2. In Section 3, we prove the uniqueness of the infinite cluster in the supercritical phase. In Section 4, we consider the Constrained-degree percolation model on regular trees.

\section{Constrained-degree percolation model on the square lattice}

\subsection{Proof of Theorem \ref{sq}}
\subsubsection{The lower bound: $\frac{1}{2}<t_c(\mathbb{L}^2,(3))$}

Let $\tilde{\omega}_{t,e}:=I_{\{U_e\leq t\}}$ be the configuration of open and closed bonds for the unrestricted model, i.e., each bond $e$ opens at time $U_e$, by the Harris-Kesten result (see \cite{Gr} or \cite{BR}), we know that the critical time is $\frac{1}{2}$. The idea is to define an intermediate model that is an essential diminishment of the unrestricted model (in the sense of \cite{AG} and \cite{BBR}). More precisely we define  the essential diminishment on a regular sublattice of $\mathbb{L}^2$ as described in Example B from page 66 of \cite{Gr}. These configurations will be denoted by $\hat{\omega}_{t,e}$ and this intermediate model can be coupled (using the same uniform random variables $(U_e)$) with the Constrained-degree percolation model, in such a way that $\hat{\omega}_{t,e}\geq \omega_{t,e}$ for all $t\in[0,1]$ and $e\in\E (\mathbb{L}^2)$.

Define the box $\Lambda=\{(x_1,x_2)\in\mathbb{Z}^2;x_1=0,1,2,3\mbox{ and }x_2=0,1,2\}$ and for each $(m,n)\in\mathbb{Z}^2$, define $\Lambda_{m,n}=(4m,3n)+\Lambda$. Let $\mathbb{E}(\Lambda_{m,n})$ be the set of bonds in $\E (\mathbb{L}^2)$ with both endpoints in $\Lambda_{m,n}$ and define $$g_{m,n}:=\langle (4m+1,3n+1),(4m+2,3n+1)\rangle,$$
$$A_{m,n}:=\{e\in\mathbb{E}(\Lambda_{m,n});\#(e\cap\partial\Lambda_{m,n})=1\}$$ and $$B_{m,n}:=\{e\in\mathbb{E}(\Lambda_{m,n});\#(e\cap\partial\Lambda_{m,n})=2\};$$ observe that $\mathbb{E}(\Lambda_{m,n})=A_{m,n}\cup B_{m,n}\cup \{g_{m,n}\}$. Finally, define the event $$C_{m,n}:=\{U\in [0,1]^{\E (\mathbb{L}^2)};\max_{e\in A_{m,n}}U_e<\min_{e\in B_{m,n}\cup\{g_{m,n}\}}U_e\}$$ and the configuration for the intermediate model as

\begin{equation}\nonumber
\hat{\omega}_{t,e}=\left\{
\begin{array}
[c]{l}%
\tilde{\omega}_{t,e},\ \ \ \ \ \ \mbox{  if } e\notin\cup_{m,n}{g_{m,n}},\\
\tilde{\omega}_{t,e}.I_{C_{m,n}^c},\mbox{  if } e =g_{m,n}.
\end{array}\right.\label{eq:truncation}
\end{equation}

Observe that the configuration $\hat{\omega}_{t,e}$ is an essential diminishment of the configuration $\tilde{\omega}_{t,e}$ for the unrestricted percolation model, they coincide except on the edges $g_{m,n}$ when the event $C_{m,n}$ occurs and $\tilde{\omega}_{t,g_{m,n}}=1$. Then, by results of \cite{AG} or \cite{BBR}, the strict inequality $\frac{1}{2}<\hat{t}_c(\mathbb{L}^2,(3))$ for the intermediate-model time threshold holds. By construction of $\omega_{t,e}$ and $\hat{\omega}_{t,e}$, it follows that $\hat{\omega}_{t,e}\geq \omega_{t,e}$ for all $t\in[0,1]$ and $e\in\E (\mathbb{L}^2)$. Thus, we can obtain the lower bound $\frac{1}{2}<t_c(\mathbb{L}^2,(3))$.

\subsubsection{The upper bound: $t_c(\mathbb{L}^2,(3))<1$}

The proof is based on a Peierls argument. Let $(\mathbb{L}^2)^*:=\mathbb{L}^2 + (\frac{1}{2},\frac{1}{2})$ be the dual lattice of  $\mathbb{L}^2$, for each bond $e\in\E(\mathbb{L}^2)$ we define its dual $e^*$ as the unique bond of $\E((\mathbb{L}^2)^*)$ that crosses $e$. Given a temporal-configuration $\omega_t$ on $\mathbb{L}^2$ (here we are omitting indices) we define the induced dual temporal-configuration $\omega^*_t$ on $( \mathbb{L}^2)^*$ declaring each bond as open if and only if its dual is open, that is, $\omega^*_{t,e^*}=\omega_{t,e}$.

A contour on $(\mathbb{L}^2)^*$ is a finite path $\langle x_0,x_1,\dots,x_l\rangle$ such that $\langle x_i,x_{i+1}\rangle\in\E((\mathbb{L}^2)^*),\forall i=0,\dots,l-1,\ x_0=x_l$ and $x_i\neq x_j$ if $|i-j|<l$. The proof is based on the topological fact that in $\mathbb{L}^2$ some cluster of open edges is finite if and only if it is surrounded by a contour of closed edges in $(\mathbb{L}^2)^*$ (see \cite{Ke} for a formal proof of this fact). 

Given a contour $\gamma=\langle x_0,x_1,\dots,x_l\rangle$ we denote by $n(\gamma)$ its number of bonds (that is, $n(\gamma)=l$), by $r(\gamma)$ its number of sides and by $m(\gamma)$ its number of sides with length one, that is, $$r(\gamma):=\#\{i\in\{0,\dots,l-1\};x_i-x_{i-1}\neq x_{i+1}-x_{i}\}$$ and $$m(\gamma):=\#\{i\in\{0,\dots,l-1\};x_i-x_{i-1}\neq x_{i+1}-x_{i}\mbox{ and }x_{i+1}-x_{i}\neq x_{i+2}-x_{i+1}\}.$$

Define $\Gamma_{n,r,m}$ as the set of all contours surrounding the origin with $n$ bonds, $r$ sides and $m$ sides with length one and define $\Gamma:=\cup_{n,r,m}\Gamma_{n,r,m}$ and $\Gamma_{n}:=\cup_{r,m}\Gamma_{n,r,m}$. 

Let $C_{n,r,m}(t):=\cup_{\gamma\in\Gamma_{n,r,m}}(\gamma\mbox{ is closed in }\omega^*_t)$ be the event that there is a closed circuit in $\Gamma_{n,r,m}$, surrounding the origin at time $t$. Analogously, we define $C_{n}(t):=\cup_{\gamma\in\Gamma_{n}}(\gamma\mbox{ is closed in }\omega^*_t)$. By Borel-Cantelli's Lemma, it is enough to prove that there exists $t<1$ such that 
\begin{equation}\label{serieBC}
\sum_{n=4}^\infty P(C_n(t))<\infty.
\end{equation}

The convergence of the series in \ref{serieBC} is based on the two lemmas below whose proofs are given in the next section.

\begin{lemma}\label{lema1}For fixed $n,r,m\in\N$, there exists a function $\epsilon:(0,1)\rightarrow (0,+\infty)$ with $\lim_{t\rightarrow 1}\epsilon (t)=0,$ such that, for all contours $\gamma\in\Gamma_{n,r,m}$, it holds that
\begin{equation}\nonumber
P(\gamma\mbox{ is closed in }\omega^*_t)\leq \left\{
\begin{array}
[c]{l}%
\big(\frac{2}{3}+\epsilon\big)^{n-2r+m}\big(p +\epsilon\big)^{2r-2m}\epsilon^{m},\ \ \ \mbox{  if } r>4\mbox{ or }m\neq 2\\
\big(\frac{1}{2}+\epsilon\big)^{n-6},\ \ \ \ \ \ \mbox{  if } r=4\mbox{ and }m=2
\end{array}\right.
\end{equation}
with $p =(\frac{439}{18144})^{\frac{1}{4}}$.
\end{lemma}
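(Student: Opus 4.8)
The plan is to estimate $P(\gamma\mbox{ is closed in }\omega^*_t)$ for a fixed contour $\gamma$ by revealing the primal edges $e$ with $e^*\in\gamma$ in a carefully chosen order — one side of the contour at a time — and bounding, edge by edge, the conditional probability that $e$ is closed at time $t$. Recall that $e=\langle u_1,u_2\rangle$ is closed at time $t$ in $\omega_t$ precisely when either $U_e>t$ (probability $1-t$, which is $\epsilon(t)$ up to the choice $\epsilon(t):=1-t$ or a constant multiple thereof), or $U_e\le t$ but the opening of $e$ was blocked because one of its endpoints $u_i$ already had $k_{u_i}-1=2$ open incident edges at time $U_e$. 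So the key observation is that, for a \emph{generic} dual edge $e^*$ on the contour, being closed has probability at most $t\cdot(\mbox{something})+(1-t)$; but the crucial gain is that \textbf{a dual edge lying in the interior of a straight side} of $\gamma$ can only be closed if it timed out (genuinely $U_e>t$, contributing $\epsilon$) or if a blocking configuration occurred, and by comparing with the unrestricted model — where straight runs of closed dual edges have probability $(1-t)$ per edge — the restricted model cannot do better than roughly $\tfrac13+\epsilon$ per such edge once we account for the fact that a blocked edge forces open edges elsewhere. I would make this precise by treating three regimes for the dual edges along $\gamma$: (i) edges in the interior of a side of length $\ge 2$, contributing a factor $\le \tfrac23+\epsilon$ each (there are $n-2r+m$ of these, since each of the $r-m$ long sides has at least two "non-endpoint" edges removed from the count in the obvious bookkeeping, and a short side of length one has its single edge counted separately); (ii) edges adjacent to a corner, contributing $\le p+\epsilon$ each (there are $2r-2m$ of these, two per long side); (iii) the isolated edges of short (length-one) sides, which are pinned between two corners and for which a much smaller bound $\le\epsilon$ holds because being closed there, given the geometry, forces an unlikely coincidence of blocking events — there are $m$ of these.

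**The heart of the argument** is the numerical constant $p=(439/18144)^{1/4}$, and establishing it is where the real work lies. I expect this comes from an explicit finite-volume computation: one isolates the local neighborhood of a corner of the contour (a bounded patch of $\mathbb{L}^2$, say a $3\times 3$ or $4\times 4$ block of vertices together with all incident edges), enumerates over the relative order statistics of the finitely many uniform variables $U_e$ in that patch, and computes exactly the probability that the two primal edges dual to the two contour-edges meeting at that corner are both closed at time $t=1$ while respecting the degree-$3$ constraint. Taking $t=1$ gives the limiting constant; the factor-of-four power suggests that two corners' worth of edges (four edges total) are being grouped, and $(439/18144)^{1/4}$ is the fourth root of the per-four-edge probability, so that assigning $(p+\epsilon)^{2r-2m}$ distributes it as $p$ per corner-adjacent edge. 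I would carry this out by fixing the block, listing the relevant events, and doing the combinatorial integration — tedious but elementary — deferring the actual arithmetic to the proof of the lemma in the next section as the paper promises.

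**The special case $r=4$, $m=2$** must be handled separately because it is exactly the family of $1\times k$ rectangular contours (two long parallel sides, two unit-length sides), and for these the generic bound above is not summable when combined with the entropy count $|\Gamma_n|$ — or rather, the bookkeeping $n-2r+m=n-6$ collapses and one wants a clean $(\tfrac12+\epsilon)^{n-6}$. For such a contour the dual edges come in two long straight runs; along a straight run in the unrestricted model each closed edge costs $1-t$, but the relevant comparison for the restricted model on a long straight run gives $\tfrac12+\epsilon$ per edge (this should itself be a short sub-computation, perhaps via a transfer-matrix / recursive bound on a strip), and the two unit sides and the absence of genuine corners mean there is no $p$-factor at all. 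So I would prove a separate one-dimensional-flavored estimate: along a maximal straight segment of $m'$ consecutive collinear dual edges, the probability all are closed at time $t$ is at most $(\tfrac12+\epsilon(t))^{m'}$, uniformly; then the $r=4,m=2$ case follows by applying it to the two long sides (total length $n-2$, minus the $4$ edges nearest the two unit sides that we give away for free, leaving $n-6$).

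**The main obstacle** I anticipate is \emph{the ordering/independence issue}: the events "edge $e$ is closed" for $e^*$ along $\gamma$ are not independent, since a blocked edge at a corner constrains the degrees — hence the open/closed status — of nearby edges, some of which are also on $\gamma$. The fix is to reveal edges sequentially and bound each conditional probability by its worst case over the already-revealed information, using the fact that the Harris construction is monotone in a suitable sense and that conditioning on "more open edges around $u_i$" can only make it \emph{easier} for $e$ to be blocked but the blocking still requires $U_e\le t$; showing that the per-edge bounds $\tfrac23+\epsilon$, $p+\epsilon$, $\epsilon$ survive this conditioning — i.e., that they are genuine conditional bounds and not just marginal ones — is the delicate point, and it is what forces the somewhat awkward grouping into interior/corner/unit-side edges rather than a naive product over all $n$ edges.
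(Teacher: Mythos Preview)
Your three-way classification of contour edges (interior, corner-adjacent, unit-side) matches the paper's decomposition in spirit, and you are right that the constant $p=(439/18144)^{1/4}$ arises from an exact computation over the order statistics of finitely many uniforms near a corner (in fact $9!$ permutations of nine edges, grouped four contour edges at a time). But your proposal has a genuine gap in the $\epsilon^m$ factor. It is \emph{not} true that a unit-side dual edge, by itself, is closed with probability $O(\epsilon)$: its primal edge $e$ has only one incident closed primal edge forced at each endpoint by the contour geometry, so blocking remains perfectly possible at $t=1$. The paper's mechanism is different and requires a real idea you have not supplied. One introduces \emph{anomalies}: specific triples of collinear-or-zigzag dual edges with the property that if all three lie in $\gamma$ (hence are closed), the primal edge dual to the central one has both endpoints of degree $\le 2$ throughout, so it can only be closed because $U>t$. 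One then proves a combinatorial lemma (an injection from unit-side edges into anomalous edges of $\gamma$) showing $\#\{\text{anomalous edges}\}\ge m$ whenever $r>4$ or $m\ne2$; this is where the $\epsilon^m$ comes from, and it is why rectangles with $r=4,m=2$ must be excluded and handled separately. Your phrase ``forces an unlikely coincidence of blocking events'' does not capture this and would not lead to a proof.

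Separately, the paper does not handle the dependence among the events $\{e^*\text{ closed}\}$ by sequential revealing and worst-case conditioning. Instead it constructs for each $e^*$ an explicit witness set $X(e^*)\subset\E$ with $\{e^*\text{ closed}\}\subset\{U_e>t\}\cup\{U_e>\min_{f\in X(e^*)}U_f\}$, and arranges these $X(e^*)$ to be pairwise disjoint for edges in $A(\gamma)\cup B(\gamma)$ and disjoint from those of $C(\gamma)$; the non-disjoint overlaps within $C(\gamma)$ are exactly the four-edge groups producing $439/18144$. This gives genuine independence of the (larger) witness events and hence a clean product bound, whereas your conditional scheme would have to control how conditioning on earlier closures shifts the distribution of the remaining $U$'s --- a step you flag as ``the delicate point'' but do not resolve. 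For the $r=4,m=2$ case the paper's argument is also simpler than your proposed transfer-matrix estimate: in a $1\times k$ rectangle the witness sets $X(e^*)$ for interior edges have cardinality $1$, giving $(\tfrac12+\epsilon)$ per edge directly.
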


\begin{lemma}\label{lema2}For any $n,r,m\in\N$,

\[\#\Gamma_{n,r,m}\leq\frac{n^2}{4}2^{r}\binom{r}{m}\binom{n-r-1}{r-m-1}.\]
\end{lemma}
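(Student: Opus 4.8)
The plan is to encode each contour $\gamma\in\Gamma_{n,r,m}$ by a bounded amount of combinatorial data, check that the encoding is injective, and count the possible data. First I would fix a canonical vertex of $\gamma$: let $v_0$ be the vertex of $\gamma$ with smallest second coordinate and, among those, with smallest first coordinate. A short check shows that $v_0$ is necessarily a corner of $\gamma$, and that the two bonds of $\gamma$ incident to $v_0$ point East and North (the bottom-most row of $\gamma$ consists of horizontal segments, and the left endpoint of the left-most such segment is $v_0$); hence, traversing $\gamma$ counterclockwise so that the origin lies on its left, the first side out of $v_0$ points East. I would then record $\gamma$, read off from $v_0$ counterclockwise as the ordered list of its $r$ sides, by means of: (i) the position of $v_0$; (ii) at each of the $r-1$ corners after $v_0$, whether the turn is to the left or to the right (the turn at $v_0$, and the fact that the walk must close up, being irrelevant for an upper bound); (iii) which $m$ of the $r$ sides have length one; (iv) the lengths, in order, of the remaining $r-m$ sides. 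Since $\gamma$ is reconstructed from (i)--(iv), this map is injective on $\Gamma_{n,r,m}$, so $\#\Gamma_{n,r,m}$ is at most the number of possible values of the data.

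The data in (ii) contributes a factor $2^{r-1}$ and that in (iii) a factor $\binom{r}{m}$. For (iv), the $r-m$ sides in question have integer lengths $\geq 2$ with total $n-m$; writing each length as $2$ plus a non-negative integer turns this into counting compositions of $(n-m)-2(r-m)=n+m-2r$ into $r-m$ non-negative parts, of which there are $\binom{(n+m-2r)+(r-m)-1}{(r-m)-1}=\binom{n-r-1}{r-m-1}$ by stars and bars. Hence (ii)--(iv) together contribute at most $2^{r-1}\binom{r}{m}\binom{n-r-1}{r-m-1}\leq 2^{r}\binom{r}{m}\binom{n-r-1}{r-m-1}$.

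It remains to bound the number of admissible positions of $v_0$, which is the one place where the hypothesis that $\gamma$ surrounds the origin is used, and is the crux of the argument. Because $\gamma$ encloses the origin, its bounding box contains $(0,0)$ and has width $w\geq 1$ and height $h\geq 1$; because $\gamma$ has $n$ bonds, an elementary crossing count on the vertical and horizontal lines gives $n\geq 2(w+h)$. Together with the fact that the box contains $(0,0)$, this confines each coordinate of $v_0$ to an explicit interval whose length is comparable to $w$, respectively $h$; since $wh\leq (n/4)^2$ by the arithmetic-geometric mean inequality, a routine count of the half-integer points involved leaves at most $\frac{n^2}{4}$ possibilities for $v_0$. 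Multiplying the four factors yields the stated inequality. The only genuinely geometric ingredient is this localization of $v_0$; the rest is bookkeeping, the point requiring care being the injectivity of the encoding — in particular that the ordered list (iv), together with (iii), recovers the full cyclic sequence of side lengths of $\gamma$.
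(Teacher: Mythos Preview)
Your argument is correct and follows the same route as the paper's: pick a canonical corner of the contour, then record which of the $r$ sides have length one (the factor $\binom{r}{m}$), the ordered lengths of the remaining $r-m$ sides summing to $n-m$ with each part $\geq 2$ (the factor $\binom{n-r-1}{r-m-1}$, via stars and bars exactly as you do), and a left/right choice at each corner (the factor $2^{r}$; you sharpen this to $2^{r-1}$ and then relax). The paper's proof is in fact terser than yours on every point.

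One small comment on the $n^{2}/4$ step. Your inequality $wh\leq (n/4)^{2}$ bounds the number of \emph{translates of a fixed shape} whose bounding box contains the origin; it does not, as written, bound the cardinality of the set $\{v_{0}(\gamma):\gamma\in\Gamma_{n,r,m}\}$, since $w$ and $h$ vary with $\gamma$. This is harmless: either reorder the encoding so that the shape data (ii)--(iv) are chosen first and the placement (i) second (then your $wh\leq n^{2}/16\leq n^{2}/4$ applies verbatim), or observe directly that if $v_{0}=(a,-j-\tfrac12)$ then $h\geq j+1$, hence $w\leq n/2-j-1$, so $a$ lies in an interval of at most $2w-1\leq n-2j-3$ half-integers; summing over $j\geq 0$ gives $\sum_{j\geq 0}(n-2j-3)_{+}=(n-2)^{2}/4\leq n^{2}/4$. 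The paper simply asserts this bound without proof, so your treatment is already more explicit.
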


Dropping the variable $t$ in the notation, we can write the sum in (\ref{serieBC}) as

\begin{equation}\label{BC2}\nonumber
\sum_{n=4}^\infty P(C_n)=\sum_{n=4}^\infty P(C_{n,n,n})+\sum_{n\geq 4}^\infty P(C_{n,4,2})+\sum_{m\geq 0}^\infty\sum_{n\geq m+1}^{\infty}\sum_{r=m+1}^{\frac{m+n}{2}} P(C_{n,r,m}).
\end{equation}
The first term in the right-hand side counts over the set of circuits with all sides have length one, the second term counts over the set of rectangular circuits and in the third term we are using that $n\geq2r-m$. By Lemmas \ref{lema1} and \ref{lema2}, it holds that

$$\sum_{n=4}^\infty P(C_{n,n,n})\leq\sum_{n=4}^\infty n^2 2^n\epsilon^n<\infty$$ and 
$$\sum_{n\geq 4}^\infty P(C_{n,4,2})\leq\sum_{n=4}^\infty \big(\frac{1}{2}+\epsilon\big)^{n-6}<\infty$$
for some $t<1$, since $\lim_{t\rightarrow 1}\epsilon(t)=0$. Thus, to conclude the proof of Theorem \ref{sq}, it is enough to show, for some $t<1$, the convergence of

$$S(t)=\sum_{m\geq 0}^\infty\sum_{n\geq m+1}^{\infty}\sum_{r=m+1}^{\frac{m+n}{2}} P(C_{n,r,m}).$$ Using Lemmas \ref{lema1} and \ref{lema2} again, we can bound
$$S(t)\leq\sum_{m\geq 0}\bar{\epsilon}^m\sum_{n\geq m+1}n^2(\frac{2}{3}+\epsilon)^{n}\sum_{r=m+1}^{\frac{m+n}{2}}\binom{r}{m}\binom{n-r-1}{r-m-1}\alpha^{r},$$
where $\alpha=\frac{9(p+\epsilon)^2}{2}$ and $\bar{\epsilon}=\frac{(2/3+\epsilon)\epsilon}{p^2}$. Doing the change of variables $s=r-m$ and $k=n-m$, we have that
\begin{equation}\label{BC3}S(t)\leq\sum_{m\geq 0}\tilde{\epsilon}^m\sum_{k\geq 1}(k+m)^2(\frac{2}{3}+\epsilon)^{k}\binom{k+m}{m}\sum_{s=1}^{\frac{k}{2}}\binom{k-s-1}{s-1}\alpha^{s},
\end{equation} 
with $\tilde{\epsilon}=(2/3+\epsilon)\alpha\bar{\epsilon}$ (observe that $\bar{\epsilon},\tilde{\epsilon}\rightarrow 0$ as $t\rightarrow 1$). The next proposition controls the inner summation in the r.h.s. of (\ref{BC3}).

\begin{proposition}\label{prop1}For all $k\geq 2$, the function $$f(s)=\alpha^s\binom{k-s-1}{s-1},\ 1\leq s\leq\frac{k}{2}$$ reaches its maximum in $s^*=\lceil r_1\rceil$, where $r_1$ is defined in Equation \ref{raiz} below.
\end{proposition}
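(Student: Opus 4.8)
The plan is to treat $f$ as a function of a real variable $s \in [1, k/2]$ and locate the unique sign change of the ratio $f(s+1)/f(s)$, which will give a unimodal sequence whose discrete maximizer is the ceiling of the relevant root. First I would write, for integer $s$,
\[
\frac{f(s+1)}{f(s)} = \alpha \cdot \frac{\binom{k-s-2}{s}}{\binom{k-s-1}{s-1}} = \alpha \cdot \frac{(k-s-2)!\,(s-1)!\,(k-2s)!}{(k-2s-2)!\,s!\,(k-s-1)!} = \alpha \cdot \frac{(k-2s)(k-2s-1)}{s\,(k-s-1)}.
\]
The sequence $f$ is increasing at $s$ exactly when this ratio exceeds $1$, i.e. when $\alpha (k-2s)(k-2s-1) > s(k-s-1)$. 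Expanding, this is a quadratic inequality in $s$ of the form $a s^2 + b s + c > 0$ with leading coefficient $a = 4\alpha - 1$ (after collecting: $\alpha(k-2s)(k-2s-1) = \alpha(4s^2 - (4k-2)s + k^2-k)$ and $s(k-s-1) = -s^2 + (k-1)s$, so $a = 4\alpha + 1$... one must be careful with signs here). In any case one obtains a concrete quadratic $P(s) = A s^2 + B s + C$ in $s$ with coefficients depending on $\alpha$ and $k$, whose larger root I would call $r_1$; Equation~\ref{raiz} is precisely the formula $r_1 = \big(-B + \sqrt{B^2 - 4AC}\big)/(2A)$ (or its analogue with the correct sign convention).

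Next I would verify that on the integer range $1 \le s \le \lfloor k/2 \rfloor$ the ratio $f(s+1)/f(s)$ crosses the value $1$ exactly once and from above to below, so that $f$ increases up to $s = \lceil r_1 \rceil$ and decreases afterward. Concretely: $f(s+1)/f(s) \ge 1 \iff P(s) \le 0 \iff s \le r_1$ (with the sign of $P$ arranged so the parabola opens downward on the relevant branch, or equivalently the relevant root being the one we want). Hence $f(s) < f(s+1)$ for $s < r_1$ and $f(s) > f(s+1)$ for $s > r_1$, which forces the maximum over integers to occur at the smallest integer $\ge r_1$, namely $s^* = \lceil r_1\rceil$ — with the usual caveat that if $r_1$ is itself an integer then $f(r_1) = f(r_1+1)$ and both $\lceil r_1 \rceil = r_1$ and $r_1 + 1$ are maximizers, so the statement still holds as written. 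I would also record the boundary checks: that $r_1 \in [1, k/2]$ for the parameter ranges arising in the application (small $\alpha$, $k \ge 2$), handling the edge case $k = 2$ where the sum has the single term $s = 1$ separately if needed.

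The main obstacle I anticipate is purely bookkeeping: getting the quadratic $P(s)$ and its discriminant exactly right, and confirming that the root identified as $r_1$ in Equation~\ref{raiz} is the one that governs the transition (rather than the spurious root of the quadratic), including the sign of the leading coefficient $4\alpha - 1$ versus $1$. Since $\alpha = 9(p+\epsilon)^2/2$ with $p = (439/18144)^{1/4}$ is small (so $4\alpha$ is well below $1$ for $t$ near $1$), the parabola $s \mapsto \alpha(k-2s)(k-2s-1) - s(k-s-1)$ opens downward, which is what makes ``$\le r_1$'' the increasing region and pins the maximizer at $\lceil r_1 \rceil$; I would state this sign condition explicitly as the hypothesis under which the proposition is invoked. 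The rest — monotonicity of the ratio in $s$, one-sidedness of the crossing — follows immediately from the factored form of $f(s+1)/f(s)$ and requires no delicate estimate.
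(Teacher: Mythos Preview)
Your approach is exactly the paper's: compute the ratio $f(s+1)/f(s)$, reduce ``ratio $\ge 1$'' to the sign of a quadratic in $s$, and read off the maximizer as $\lceil r_1\rceil$. The paper defines $g(s)=f(s+1)/f(s)-1$ and writes its numerator as the quadratic $(4\alpha+1)s^2-[(4\alpha+1)k-(2\alpha+1)]s+\alpha k(k-1)$, identifies its roots $r_1<r_2$, observes $r_2\ge k/2$, and concludes.

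The bookkeeping you flagged as the main obstacle is indeed where your proposal slips. Expanding $\alpha(k-2s)(k-2s-1)-s(k-s-1)$ gives leading coefficient $4\alpha+1$, not $4\alpha-1$; this is strictly positive for every $\alpha>0$, so the parabola opens \emph{upward}, not downward, and the size of $\alpha$ is irrelevant to that. Consequently the numerator is nonnegative for $s\le r_1$ and $s\ge r_2$, with $r_1$ the \emph{smaller} root (your proposal calls it the larger). The reason the ratio crosses $1$ only once on $[1,k/2]$ is therefore not the direction of opening but the fact, checked in the paper, that $r_2\ge k/2$ lies outside the range; you should replace your ``opens downward'' justification with this observation. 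Once those signs are straightened out, your argument and the paper's coincide line by line.
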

\begin{proof}Define $$g(s):=\frac{f(s+1)}{f(s)}-1=\frac{(4\alpha+1)s^2+[(4\alpha+1)k-(2\alpha+1)]s+ \alpha k(k-1)}{s(k-s-1)},$$ we have that $f(s+1)\geq f(s)$ if and only if $g(s)\geq 0$.

Note that the quadratic function $(4\alpha+1)s^2+[(4\alpha+1)k-(2\alpha+1)]s+\alpha k(k-1)$ has roots

\begin{equation}\label{raiz}
r_1=\frac{k}{2}-\frac{2\alpha +1 +\Delta^{1/2}}{8\alpha +2}
\end{equation} and
\begin{equation}\nonumber
r_2=\frac{k}{2}-\frac{2\alpha +1 -\Delta^{1/2}}{8\alpha +2},
\end{equation} 
where $\Delta=(4\alpha +1)k(k-2)+(2\alpha +1)^2>0$. As $r_2\geq k/2$ the maximum of $f(s)$ is reached in $\lceil r_1\rceil$. 

\end{proof}
By Proposition \ref{prop1} we can bound
\begin{equation}\label{BC4}
S(t)\leq\sum_{m\geq 0}\tilde{\epsilon}^m\sum_{k\geq 1}(k+m)^3(\frac{2}{3}+\epsilon)^{k}\binom{k+m}{m}\binom{k-s^*-1}{s^*-1}\alpha^{s^*}.
\end{equation}

Using Stirling's approximation we can find some constant $c$ such that

\begin{equation}\label{stir}\nonumber
\binom{k-s^*-1}{s^*-1}\leq c\frac{(k-s^*-1)^{k-s^*-1}}{(s^*-1)^{s^*-1}(k-2s^*)^{k-2s^*}}
\end{equation}
and we observe that

\begin{equation}\label{lim}
\lim_{k\rightarrow\infty}\big[c\frac{(k-s^*-1)^{k-s^*-1}}{(s^*-1)^{s^*-1}(k-2s^*)^{k-2s^*}}\alpha^{s^*}\big]^{\frac{1}{k}}=\frac{(1-\beta)^{1-\beta}}{\beta^{\beta}(1-2\beta)^{1-2\beta}}\alpha^{\beta},
\end{equation}
where $\beta=\lim_{k\rightarrow\infty}\frac{s^*}{k}=\frac{1}{2}-\frac{1}{2\sqrt{4\alpha +1}}$.
For the right-hand side in Equation \ref{lim}, it holds that
\begin{equation}\label{lim2}\nonumber
\frac{(1-\beta)^{1-\beta}}{\beta^{\beta}(1-2\beta)^{1-2\beta}}\alpha^{\beta}=\frac{1-\beta}{1-2\beta}=\frac{1}{2}+\frac{1}{2}\sqrt{18(p+\epsilon)^2 +1}.
\end{equation}

Then there exists a constant $c>0$ such that,
 
\begin{equation}\label{expo}
(\frac{2}{3}+\epsilon)^{k}\binom{k-s^*-1}{s^*-1}\alpha^{s^*}\leq c\delta(t)^k,\ \forall k
\end{equation}
where $\delta (t)=(\frac{2}{3}+\epsilon)[\frac{1}{2}+\frac{1}{2}\sqrt{18(p+\epsilon)^2 +1}]$ and $\delta(t)<1$ for some $t<1$ (note that $\delta(1)<0.984).$

By Inequalities \ref{BC4} and \ref{expo}, we have

\[S(t)\leq c\sum_{m\geq 0}\tilde{\epsilon}^m\sum_{k\geq 1}(k+m)^3\binom{k+m}{m}\delta(t)^k\]

\[\leq c\sum_{m\geq 0}\tilde{\epsilon}^m(m+3)^3\sum_{k\geq 1}\binom{k+m+3}{m+3}\delta(t)^k\]

\[= c\sum_{m\geq 0}\tilde{\epsilon}^m(m+3)^3\frac{1}{(1-\delta(t))^{m+4}}\]

In the equality above we are using the identity $$\sum_{k\geq 0}\binom{k+j}{j}x^k=\frac{1}{(1-x)^{j+1}},\ \forall x\in(0,1)$$ and in the last inequality we are using the bound $$(k+m)^3\binom{k+m}{m}\leq (m+3)^3\binom{k+m+3}{m+3},\ \forall k,m\in\mathbb{N}.$$ Thus, $S(t)<\infty$, for some $t<1$, given that $\lim_{t\rightarrow 1}\frac{\tilde{\epsilon}}{1-\delta(t)}=0$, finishing the proof of Theorem \ref{sq}

\subsection{Proof of Lemmas}
\subsubsection{Proof of Lemma \ref{lema1}}

Initially, we observe that at time $t=1$ there doesn't exist any closed bond $\langle x,y \rangle\in\E$, with $deg(x,\omega)\leq 2$ and $deg(y,\omega)\leq 2$. This situation can occur at $t<1$ and we would like to show that this occurs with vanishing probability when $t$ goes to one. For this purpose define the following sets of triplets of bonds in $\E^*$ (see Figure \ref{fig:animals}):

{\small \[A_1=\{\langle (-\frac{1}{2},-\frac{1}{2}),(+\frac{1}{2},-\frac{1}{2})\rangle,\langle(-\frac{1}{2},+\frac{1}{2}),(+\frac{1}{2},+\frac{1}{2})\rangle,\langle(-\frac{1}{2},+\frac{3}{2}),(+\frac{1}{2},+\frac{3}{2})\rangle\},\]
\[A_2=\{\langle(-\frac{1}{2},-\frac{1}{2}),(+\frac{1}{2},-\frac{1}{2})\rangle,\langle(-\frac{1}{2},+\frac{1}{2}),(+\frac{1}{2},+\frac{1}{2})\rangle,\langle(-\frac{1}{2},+\frac{1}{2}),(-\frac{1}{2},+\frac{3}{2})\rangle\},\]
\[A_3=\{\langle(+\frac{1}{2},-\frac{1}{2}),(+\frac{1}{2},+\frac{1}{2})\rangle,\langle(-\frac{1}{2},+\frac{1}{2}),(+\frac{1}{2},+\frac{1}{2})\rangle,\langle(-\frac{1}{2},+\frac{1}{2}),(-\frac{1}{2},+\frac{3}{2})\rangle\}.\]}

Thus, it is necessary  to take these triplets into account in the estimations for $t<1$.

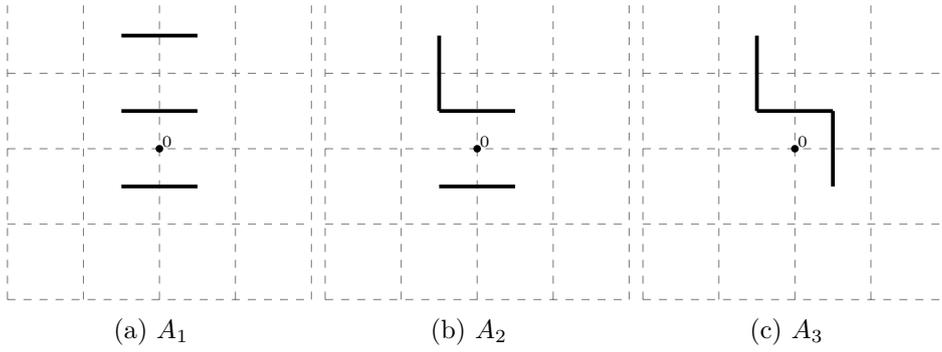
\begin{figure}
    \begin{subfigure}[b]{0.3\textwidth}
\begin{tikzpicture}
\draw[color=gray, dashed] (-2,-2) grid (2,2);
\fill (0,0) circle (0.05);
\draw(0.1,0.1)node{\tiny $0$};
\draw[line width=.05cm] (-1/2,1/2)--(1/2,1/2);
\draw[line width=.05cm] (-1/2,-1/2)--(1/2,-1/2);
\draw[line width=.05cm] (-1/2,3/2)--(1/2,3/2);
\end{tikzpicture}        
        \caption{$A_1$}
        \label{fig:gull}
    \end{subfigure}
    ~ 
    \begin{subfigure}[b]{0.3\textwidth}
    \begin{tikzpicture}
\draw[color=gray, dashed] (-2,-2) grid (2,2);
\fill (0,0) circle (0.05);
\draw(0.1,0.1)node{\tiny $0$};
\draw[line width=.05cm] (-1/2,1/2)--(1/2,1/2);
\draw[line width=.05cm] (-1/2,-1/2)--(1/2,-1/2);
\draw[line width=.05cm] (-1/2,1/2)--(-1/2,3/2);
\end{tikzpicture}  
        \caption{$A_2$}
        \label{fig:tiger}
    \end{subfigure}
    ~ 
    \begin{subfigure}[b]{0.3\textwidth}
    \begin{tikzpicture}
\draw[color=gray, dashed] (-2,-2) grid (2,2);
\fill (0,0) circle (0.05);
\draw(0.1,0.1)node{\tiny $0$};
\draw[line width=.05cm] (-1/2,1/2)--(1/2,1/2);
\draw[line width=.05cm] (-1/2,1/2)--(-1/2,3/2);
\draw[line width=.05cm] (1/2,1/2)--(1/2,-1/2);
\end{tikzpicture}  
        \caption{$A_3$}
        \label{fig:mouse}
    \end{subfigure}
    \caption{The sets $A_1,A_2$ and $A_3$}\label{fig:animals}
\end{figure} 
 
For all of these three sets, we define the central bond 
$$c(A_i)=\langle(-\frac{1}{2},+\frac{1}{2}),(+\frac{1}{2},+\frac{1}{2})\rangle,\ i=1,2,3.$$ Consider the set ${\mathscr A}=\{f(A_i); f:\Z^2\rightarrow\Z^2\mbox{ isometry and }i=1,2,3\}$, one element of ${\mathscr A}$ is called an {\em anomaly} and given an anomaly $f(A_i)$, we define its center as the edge $f(c(A_i))\in\E^*$.

Given a contour $\gamma=\langle x_0,x_1,\dots,x_l\rangle$ in $(\mathbb{L}^2)^*$, let $\E (\gamma)$ be the set of bonds of $\gamma$, an edge $e^*\in\E (\gamma)$ is called {\em anomalous} if there exists some $A\in{\mathscr A}$ such that $e^*\in A$ and $A\subset\E (\gamma)$. Let $A(\gamma)$ and $U(\gamma)$ be the set of anomalous and unitary bonds, respectively. That is $A(\gamma)=\{e^*\in\E (\gamma);e^*\mbox{ is anomalous}\}$ and $U(\gamma)=\{e^*=\langle x_i,x_{i+1}\rangle\in\E (\gamma);\ x_{i-1}-x_{i}\neq x_i-x_{i+1}\mbox{ and } x_i-x_{i+1}\neq x_{i+1}-x_{i+2}\}$ (observe that $\#U(\gamma)=m(\gamma)$).

\begin{proposition}\label{anom} For all $\gamma\in\Gamma$ with $r(\gamma)>4$ or $m(\gamma)\neq 2$, it holds that $\#A(\gamma)\geq \#U(\gamma)$.
\end{proposition}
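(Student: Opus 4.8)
The plan is to reformulate the inequality as a statement about the geometry of the self-avoiding polygon $\gamma$ and the set $R\subset\Z^2$ of vertices it surrounds, and then to charge each unitary bond to a distinct anomalous bond. For $u\in\Z^2$ write $d_\gamma(u)$ for the number of edges at $u$ whose dual bond lies in $\E(\gamma)$, i.e.\ the number of neighbours of $u$ on the opposite side of $\gamma$. The first step is the local criterion: a bond $c^*\in\E(\gamma)$, dual to the primal edge $c=\langle u,u'\rangle$, is the central bond of some anomaly contained in $\E(\gamma)$ if and only if $d_\gamma(u)\geq2$ and $d_\gamma(u')\geq2$. Necessity is immediate; for sufficiency one picks an extra bond of $\E(\gamma)$ at each endpoint of $c^*$ and observes that, up to isometry, the resulting triple of bonds is one of $A_1,A_2,A_3$ or else the single remaining ``central-plus-one-at-each-endpoint'' pattern --- a straight dual segment of length two with a unit dual prong at its midpoint --- and that pattern cannot lie inside $\E(\gamma)$, since it would force a vertex of $\gamma$ of degree three. (When $d_\gamma(u)=3$ or $d_\gamma(u')=3$ there is room in the choice to avoid the forbidden pattern, and when $d_\gamma(u)=d_\gamma(u')=2$ the triple is forced but the forbidden case is again excluded by simplicity of $\gamma$.) In particular every such central bond lies in $A(\gamma)$, so it suffices to produce $\#U(\gamma)=m(\gamma)$ distinct anomalous bonds.

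Next, classify the unitary sides by the pair of turns $(\tau,\tau')\in\{L,R\}^2$ that $\gamma$ makes at the two endpoints of the unitary bond. If $\tau\neq\tau'$ (a ``staircase'' unitary side), then the unitary bond together with its two neighbours in $\gamma$ is an isometric copy of $A_3$ with the unitary bond as its central bond --- equivalently $d_\gamma(u)=d_\gamma(u')=2$ with the two extra bonds perpendicular to $c^*$ and on opposite sides --- so it is anomalous, and I charge it to itself.

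The remaining unitary sides have type $(L,L)$ or $(R,R)$ and, by the reflection symmetry of the set-up, are handled alike. Such a side $e^*$ has its two neighbouring sides $w_L,w_R$ parallel and on the same side of $e^*$: it is the short side of a rectangular spike of $R$, or the bottom of a unit-width notch. If a wall, say $w_L$, is a single bond entered by a turn of the opposite sense, then $w_L$ is a staircase unitary side whose copy of $A_3$ already contains $e^*$, so $e^*$ is anomalous. Otherwise each wall is long or prolongs a run of equal-sense turns; as the total turning of $\gamma$ is $\pm4$ quarter-turns, a run of four consecutive equal-sense turns joining unitary sides closes $\gamma$ into the $1\times1$ square, and for any other $\gamma$ the spike or notch is ``genuine''. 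The same turning budget forces reflex turns away from the at most four convex ones, and the idea is that, reading the boundary of the notch of $e^*$ level by level, the two walls --- together with the bonds of the two sides flanking the notch --- carry copies of $A_1$ (with copies of $A_2$ near the ends of the walls), each supplying an anomalous bond strictly inside the notch region of $e^*$; charging $e^*$ to one of these by a fixed rule, e.g.\ to the central bond of the lowest such copy of $A_1$ on $w_L$, finishes the charging map. The hypothesis ``$r(\gamma)>4$ or $m(\gamma)\neq2$'' enters precisely here: it discards the $1\times k$ rectangles $(k\geq2)$, the only contours with $\#U(\gamma)=2$ and no anomaly at all (the $1\times1$ square must likewise be set aside).

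The step I expect to be the real obstacle is proving injectivity of the charging map: notches of $R$ can be nested or can share a side of $\gamma$ as a common wall, and the $A_1$-copies degenerate near the tops of the walls. I would control this by processing the $(L,L)$- and $(R,R)$-unitary sides in cyclic order around $\gamma$ and always charging a unitary bond to an anomalous bond lying strictly in the interior of its own notch region, so that distinct unitary sides use disjoint regions; the turning identity then guarantees that each such region does contain an anomalous bond once the rectangular and $1\times1$ exceptions have been removed. Verifying the $A_1$/$A_2$ assertions and this disjointness coordinate-by-coordinate is the routine but delicate remainder.
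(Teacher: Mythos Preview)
Your overall architecture matches the paper's: build an injection $F:U(\gamma)\to A(\gamma)$, sending staircase unitary bonds to themselves and doing something else for the spike ones. The staircase case and your local criterion for being the central bond of an anomaly are fine. The gap is exactly where you flag it: for an $(L,L)$/$(R,R)$ unitary bond you never actually produce a specific anomalous target, and your proposed injectivity mechanism (``charge into the interior of its own notch region, distinct notches being disjoint'') does not hold as stated --- two spike unitary bonds sitting at the two ends of the same side share a wall, so their notch regions overlap, and nested notches (a spike inside a wider spike) also overlap. Processing cyclically does not by itself separate these cases; you would still need a rule that picks \emph{different} bonds inside the shared wall, and nothing in your outline does that.

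The paper avoids this bookkeeping with a one-line choice. If $e^*=\langle x_i,x_{i+1}\rangle$ is a non-anomalous unitary bond (necessarily of spike type), let $l'$ and $l''$ be the lengths of its two adjacent walls and set $F(e^*)$ to be the \emph{last} bond of the \emph{shorter} wall (the bond farthest from $e^*$). One first checks $l',l''\geq 3$: a wall of length $1$ forces either an $A_3$ through $e^*$ (so $e^*$ would be anomalous) or a self-intersection unless $\gamma$ is a $1\times k$ rectangle, which the hypothesis excludes. With $l'\leq l''$, the bond $F(e^*)$ together with its parallel twin on the longer wall and the turn bond just past $F(e^*)$ form a copy of $A_2$ (or $A_1$), so $F(e^*)\in A(\gamma)$. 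Injectivity is then immediate: $F(e^*)$ lies on a side of length $\geq 2$, so it never coincides with a unitary bond mapped to itself; and if two spike unitary bonds $e_1^*,e_2^*$ share a wall $S$, then $F(e_1^*)$ and $F(e_2^*)$ are the two \emph{opposite} end-bonds of $S$, which are distinct because $|S|\geq 2$. This single formula replaces all of your notch-region analysis.
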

\begin{proof} It is enough to exhibit an injective function $F:U(\gamma)\rightarrow A(\gamma)$. If $e^*\in U(\gamma)\cap A(\gamma)$, define $F(e^*)=e^*$. If $e^*\in U(\gamma)\cap A(\gamma)^c$, suppose that $e^*=\langle x_i,x_{i+1}\rangle$ where $x_{i+1}=x_i+e_j$ with $j=1$ or 2, as $e^*\notin A(\gamma)$ it holds that $x_{i+2}-x_{i+1}=x_{i-1}-x_{i}=e_{3-j}$ or $x_{i+2}-x_{i+1}=x_{i-1}-x_{i}=-e_{3-j}$. Let us consider the first case, the second one is analogous.

Define integers $l^\prime(e^*)$ and $l^{\prime\prime} (e^*)$ as

$$l^\prime(e^*)=\min\{l\geq 1;\ x_{i+l+1}-x_{i+l}\neq e_{3-j}\}$$
and
$$l^{\prime\prime}(e^*)=\min\{l\geq 1;\ x_{i-l}-x_{i-l+1}\neq e_{3-j}\}.$$
Observe that $l^\prime(e^*),l^{\prime\prime}(e^*)\geq 3$. Thus, for $e^*\in U(\gamma)\cap A(\gamma)^c$, we define $F(e^*)$ as:
\begin{equation}\nonumber
F(e^*)=\left\{
\begin{array}
[c]{l}%
\langle x_{i+l^\prime(e^*)-1},x_{i+l^\prime(e^*)}\rangle,\ \ \ \ \mbox{  if } l^\prime(e^*)\leq l^{\prime\prime}(e^*),\\
\langle x_{i-l^{\prime\prime}(e^*)+1},x_{i-l^{\prime\prime}(e^*)+2}\rangle,\mbox{  if } l^\prime(e^*)> l^{\prime\prime}(e^*).
\end{array}\right.
\end{equation}

The function $F$ above is the desired injective function.
\end{proof}

Observe that Proposition \ref{anom} does not hold for rectangular circuits with sides with length one ($r(\gamma)=4$ and $m(\gamma)=2$). Thus, from now on we will deal only with circuits $\gamma$ where $r(\gamma)>4$ or $m(\gamma)\neq 2$, then Proposition \ref{anom} can be used. The case $r(\gamma)=4$ and $m(\gamma)= 2$ is postponed to the end of this subsection.

We define $B(\gamma)$ and $C(\gamma)$ as the following subsets of $\E (\gamma)$:  
\[B(\gamma)=\{e^*=\langle x_i,x_{i+1}\rangle;x_{i-1}-x_{i}=x_i-x_{i+1}=x_{i+1}-x_{i+2}\}\cap A(\gamma)^c\] and 
\[C(\gamma)=\left\{\begin{array}{l} e^*=\langle x_i,x_{i+1}\rangle;x_{i-1}-x_{i}\neq x_i-x_{i+1}\\\mbox{or } x_i-x_{i+1}\neq x_{i+1}-x_{i+2}\end{array}\right\}\\ \cap (A(\gamma)\cup U(\gamma))^c.\]

Indeed, we will prove the stronger bound:

\begin{equation}\label{abc}
P(\gamma\mbox{ is closed in }\omega^*_t)\leq \big(\frac{2}{3}+\epsilon\big)^{\#B}\big(p +\epsilon\big)^{\#C}\epsilon^{\#A}.
\end{equation} 

By Proposition \ref{anom}, $\#A\geq m$ and observing that $\#B\leq n-2r+m$, $\#C\leq 2r-2m$ (because the definitions of $B$ and $C$ exclude the anomalous bonds) and $(\frac{2}{3}+\epsilon)\geq(p +\epsilon)\geq\epsilon$, the statement of Lemma \ref{lema1} follows from Inequality \ref{abc}.

To prove the inequality \ref{abc}, we consider each bond $e^*\in \E(\gamma)$ and bound its probability to be closed at time $t$ depending if $e^*$ belongs to $A(\gamma), B(\gamma)$ or $C(\gamma)$.

If $e^*\in A(\gamma)$, choose one of the possible anomalies that contains $e^*$ and define $c(e^*)$ as the dual bond of its center. By definition of anomaly $U(c(e^*))>t$ and $$P(e^*\mbox{ is closed in }\omega^*_t)\leq P(U(c(e^*))>t)=1-t,\forall e^*\in A(\gamma).$$ Then,

\begin{equation}\label{anomalos}\nonumber P(e^*\mbox{ is closed in }\omega^*_t,\forall e^*\in A(\gamma) )\leq (1-t)^{\lfloor\frac{\#A}{9}\rfloor}.
\end{equation}

Let $X(e^*)\in\E$ be the set $\{c(e^*)\},\forall e^*\in A(\gamma)$. Given $e^*\in B(\gamma)\cup C(\gamma)$ we define the set of bonds $X(e^*)\subset\E$. Let us suppose that $\langle x_0,x_1,\dots,x_l\rangle$ is the side of $\gamma$ that contains $e^*$ and let $x_{-1}$ and $x_{l+1}$ the vertices adjacent, in $\gamma$, to $x_0$ and $x_l$, respectively. Then, $x_{j+1}-x_j=e_i,\ x_{0}-x_{-1}=\pm e_{3-i}$ and $x_{l+1}-x_l=\pm e_{3-i}$, for $i=1$ or 2. Define 

\begin{equation}\nonumber
\sigma_+=\left\{
\begin{array}
[c]{l}%
+1,\ \ \mbox{  if } x_{l+1}-x_l=e_1\mbox{ or }e_2,\\
-1,\ \ \mbox{  if } x_{l+1}-x_l=-e_1\mbox{ or }-e_2.
\end{array}\right.
\end{equation}
and
\begin{equation}\nonumber
\sigma_-=\left\{
\begin{array}
[c]{l}%
+1,\ \ \mbox{  if } x_{0}-x_{-1}=e_1\mbox{ or }e_2,\\
-1,\ \ \mbox{  if } x_{0}-x_{-1}=-e_1\mbox{ or }-e_2.
\end{array}\right.
\end{equation}

For any $e^*=\langle x_i, x_i+e_1\rangle\in B(\gamma)$ a horizontal bond, we define the set $X(e^*)$ as:

{\footnotesize \begin{equation}\nonumber
X(e^*)=\left\{
\begin{array}
[c]{l}%
\{e^*+(\frac{1}{2},\frac{\sigma_+}{2}),e^*+(-\frac{1}{2},-\frac{\sigma_+}{2})\},\mbox{  if } e^*+(0,\sigma_+)\notin\gamma\mbox{ and }e^*+(0,-\sigma_+)\notin\gamma,\\
\{e^*+(-\frac{1}{2},-\frac{\sigma_+}{2})\},\ \ \ \ \ \ \ \ \ \ \ \ \ \ \ \ \ \ \mbox{  if } e^*+(0,\sigma_+)\in\gamma\mbox{ and }e^*+(0,-\sigma_+)\notin\gamma,\\
\{e^*+(\frac{1}{2},\frac{\sigma_+}{2})\},\ \ \ \ \ \ \ \ \ \ \ \ \ \ \ \ \ \ \ \ \ \ \mbox{  if } e^*+(0,\sigma_+)\notin\gamma\mbox{ and }e^*+(0,-\sigma_+)\in\gamma.
\end{array}\right.
\end{equation}}

Analogously, if $e^*=\langle x_i, x_i+e_2\rangle\in B(\gamma)$ is a vertical bond, the set $X(e^*)\subset\E$ is defined as:

{\footnotesize \begin{equation}\label{Xinterno}
X(e^*)=\left\{
\begin{array}
[c]{l}%
\{e^*+(\frac{\sigma_+}{2},\frac{1}{2}),e^*+(-\frac{\sigma_+}{2},-\frac{1}{2})\},\mbox{  if } e^*+(\sigma_+,0)\notin\gamma\mbox{ and }e^*+(-\sigma_+,0)\notin\gamma,\\
\{e^*+(-\frac{\sigma_+}{2},-\frac{1}{2})\},\ \ \ \ \ \ \ \ \ \ \ \ \ \ \ \ \ \mbox{  if } e^*+(\sigma_+,0)\in\gamma\mbox{ and }e^*+(-\sigma_+,0)\notin\gamma,\\
\{e^*+(\frac{\sigma_+}{2},\frac{1}{2})\},\ \ \ \ \ \ \ \ \ \ \ \ \ \ \ \ \ \ \ \ \ \ \mbox{  if } e^*+(\sigma_+,0)\notin\gamma\mbox{ and }e^*+(-\sigma_+,0)\in\gamma.
\end{array}\right.
\end{equation}}

Observe that 
\begin{align*}(e^*\mbox{ is closed in }\omega^*_t)\subset (U(e)>t)\cup [(U(e)\leq t)\cap (U(e)>\min_{f\in X(e^*)}U(f))],
\end{align*}
yielding the bound
\begin{equation}\nonumber\label{internos}P(e^*\mbox{ is closed in }\omega^*_t)\leq \frac{2}{3}+(1-t),\ \forall e^*\in B(\gamma)
\end{equation}
and it holds that $X(e^*)\cap X(f^*)=\emptyset,\forall e^*\neq f^*\in A(\gamma)\cup B(\gamma)$.
 
Finally we consider the case $e^*=\langle x_i, x_i+e_j\rangle\in C(\gamma)$, there are only four different cases depending if $i=0$ 
 or $l-1$ , and $j=1$ or 2 (horizontal or vertical). Thus, for $e^*=\langle x_i, x_i+e_j\rangle\in C(\gamma)$, we define $X(e^*)$ as:

\begin{equation}\label{quina}
X(e^*)=\left\{
\begin{array}
[c]{l}%
\{e+(0,-\sigma_+),e^*+(\pm\frac{1}{2},-\frac{\sigma_+}{2})\},\ \ \mbox{  if } i=l-1\mbox{ and }j=1,\\
\{e+(0,\sigma_-),e^*+(-\frac{1}{2},\frac{\sigma_-}{2})\},\ \ \ \ \ \ \ \mbox{  if } i=0\mbox{ and }j=1,\\
\{e+(-\sigma_+,0),e^*+(-\frac{\sigma_+}{2},\pm\frac{1}{2})\},\ \ \mbox{  if } i=l-1\mbox{ and }j=2,\\
\{e+(\sigma_-,0),e^*+(\frac{\sigma_-}{2},-\frac{1}{2})\},\ \ \ \ \ \ \ \mbox{  if } i=0\mbox{ and }j=2.
\end{array}\right.
\end{equation}

With these definitions, observe that  
\begin{equation}\label{soz}
P(e^*\mbox{ is closed in }\omega^*_t)\leq \frac{1}{3}+(1-t),\ \forall e^*\in C(\gamma)
\end{equation}
and $X(e^*)\cap X(f^*)=\emptyset,\forall e^*\in C(\gamma), f^*\in A(\gamma)\cup B(\gamma)$. Then
\begin{equation}\label{cotanaoquina}P\left(\cap_{e^*\in A\cup B} (e^*\mbox{ is closed in }\omega^*_t )\right)\leq (1-t)^{\lfloor\frac{\#A}{9}\rfloor}.((1-t)+\frac{2}{3})^{\#B}
\end{equation}

We are interested in estimating the probability  $$P\left(U(e)>\min_{g\in X(e^*)}U(g),\ U(f)>\min_{g\in X(f^*)}U(g)\right),$$ unfortunately $X(e^*)\cap X(f^*)\neq\emptyset$ for some $e^*, f^*\in C(\gamma)$, thus we need a finer analysis. For any $e^*\in C(\gamma)$, the unique bond in $X(e^*)$ (see Equation \ref{quina}) parallel to $e$ is called {\em frontal}, the other one or two bonds in $X(e^*)$ parallel to $e^*$ are called {\em lateral}. 

First, let us consider the case where $X(e^*)$ and $X(f^*)$ share the same frontal bond, without loss of generality, suppose that $e^*$ is a vertical bond, $f^*=e^*+(2,0)$ and $X(e^*),X(f^*)$ share the frontal bond $e+(1,0)$, the other situations are analogous. For this case there are four different possibilities for the pair of sets $(X(e^*),X(f^*))$ depending on whether $e^*$ and $f^*$ are the lowest or the highest bond of their sides. In the worst possibility (greatest upper bound for $P(e^*,f^*\mbox{ are closed in }\omega^*_t)$), we have $X(e^*)=\{e+(1,0),e^*+(\frac{1}{2},-\frac{1}{2})\}$ and $X(f^*)=\{e+(1,0),e^*+(\frac{3}{2},-\frac{1}{2})\}$ (see Figure \ref{2sobre15}), then $P(U(e)>\min_{g\in X(e^*)}U(g),\ U(f)>\min_{g\in X(f^*)}U(g))=\frac{2}{15}$, yielding the bound

\begin{equation}\label{inter2}
\begin{array}
[c]{l}%
P(e^*,f^*\mbox{ are closed in }\omega^*_t)\leq P(U(e)>t\mbox{ or }U(f)>t)\\
+\ P\{(U(e)\leq t,U(f)\leq t)\cap (U(j)>\min_{g\in X(j^*)}U(g),\ j=e,f)\}\\
\leq\ 1-t^2 +\frac{2}{15}\ \leq \left((1-t^2)^{\frac{1}{2}}+\sqrt{\frac{2}{15}}\right)^2
\end{array}
\end{equation}

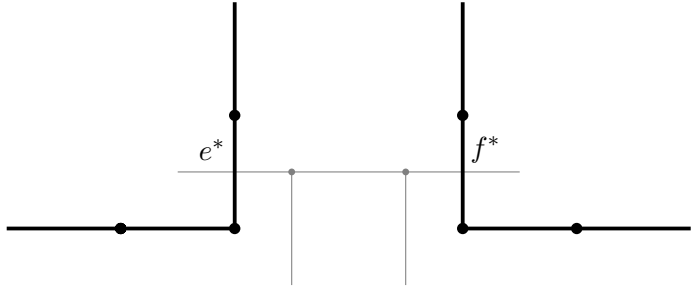
\begin{figure}
\centering
\begin{tikzpicture}[scale=1.5]
\draw[gray](1/2,1/2)--(1/2+1,1/2);
\draw[gray](1/2,1/2)--(1/2-1,1/2);
\draw[gray](1/2-1,1/2)--(1/2-2,1/2);    
\draw[line width=.05cm] (-3,0)--(-2,0)--(-1,0)--(-1,1)--(-1,2);
\fill (-2,0) circle (0.05 cm) circle (0.05 cm) (-1,0)circle (0.05 cm) (-1,1)circle (0.05 cm);
\draw[line width=.05cm](1,2)--(1,1)--(1,0)--(2,0)--(3,0);
\fill (1,1) circle (0.05 cm) (1,0) circle (0.05 cm) (2,0) circle (0.05 cm);
\fill[gray] (1/2,1/2) circle (0.03 cm);
\fill[gray] (-1/2,1/2) circle (0.03 cm);
\draw(1/2+1-0.3,1/2+0.2)node{$f^*$};
\draw(1/2-2+0.3,1/2+0.2)node{$e^*$};
\draw[gray](1/2,1/2)--(1/2,1/2-1);
\draw[gray](1/2-1,1/2)--(1/2-1,1/2-1);
\end{tikzpicture}\caption{The black edges belong to the circuit $\gamma$ and the gray edges correspond to the bonds $e,f,X(e^*)$ and $X(f^*)$.\label{2sobre15}}
\end{figure}

Now consider the case where $X(e^*)$ and $X(f^*)$ share the same lateral bond. In this situation, there exist four bonds $e^*,f^*,g^*,h^*\in C(\gamma)$, such that their sets $X(\cdot)$ have some intersections. Apart from rotations (without loss of generality suppose $e^*$ a vertical bond), there are two possible situations. In the first case, let us call it case I, we have that $f^*=e^*+(1,-1), g^*= e+(\frac{3}{2},-\frac{1}{2})$ and $h^*=e+(-\frac{1}{2},-\frac{1}{2})$, thus 

\begin{equation}\nonumber
\begin{array}
[c]{l}%
X(e^*)=\{e+(1,0),e^*+(\frac{1}{2},-\frac{1}{2})\},\\
X(f^*)=\{e+(0,-1), e^*+(\frac{1}{2},-\frac{1}{2}),e^*+(\frac{1}{2},-\frac{3}{2})\},\\
X(g^*)=\{e^*+(\frac{3}{2},\frac{1}{2}),e+(1,0)\}\\
X(h^*)=\{e^*+(-\frac{1}{2},-\frac{3}{2}),e+(0,-1),e+(-1,-1))\}.
\end{array}
\end{equation}

In the second case, case II, we have that $f^*=e^*+(1,1), g^*= e+(\frac{3}{2},\frac{1}{2})$ and $h^*=e+(-\frac{1}{2},\frac{1}{2})$, thus

\begin{equation}\nonumber
\begin{array}
[c]{l}%
X(e^*)=\{e+(1,0),e^*+(\frac{1}{2},-\frac{1}{2})e^*+(\frac{1}{2},\frac{1}{2})\},\\
X(f^*)=\{e+(0,1), e^*+(\frac{1}{2},\frac{1}{2})\},\\
X(g^*)=\{e^*+(\frac{3}{2},-\frac{1}{2}),e+(1,0)\}\\
X(h^*)=\{e^*+(-\frac{1}{2},\frac{3}{2}),e+(0,1),e+(-1,1)\}.
\end{array}
\end{equation}

Finally, when there exist two different bonds $e^*,g^*\in C(\gamma)$ with $X(e^*)$ and $X(g^*)$ sharing a edge frontal for $e^*$ and lateral for $g^*$ occurs exactly (apart rotations, without loss of generality we are supposing that $e^*$ is a vertical bond) the two cases I and II described above.

In both cases we are interested in estimating the probability of the event 

\begin{equation}\label{inter4}\bigcap_{i\in\{e,f,g,h\}}(U(i)>\min_{j\in X(i^*)}U(j)).\end{equation} 

We observe that it is possible to find some $i^*\in C(\gamma)/\{e^*,f^*,g^*,h^*\}$ such that $X(i^*)\cap \left(\cup_{j\in \{e,f,g,h\}}X(j^*)\right)\neq\emptyset$. More precisely the frontal bonds in $X(g^*)$ and $X(h^*)$ could be the frontal bond of $X(i^*)$ for some $i^*\in C(\gamma)/\{e^*,f^*,g^*,h^*\}$. To hold some independence we bound the probability of \ref{inter4} deleting the frontal bonds of $X(g^*)$ and $X(h^*)$, that is, consider the event
{\small \begin{align*}
R_{e^*,f^*,g^*,h^*}:=\left(\bigcap_{i\in\{e,f\}}(U(i)>\min_{j\in X(i^*)}U(j))\right)\bigcap\left(\bigcap_{i\in\{g,h\}}(U(i)>\min_{j\in \tilde{X}(i^*)}U(j))\right),
\end{align*}}
where $\tilde{X}(i^*)$ is the set of lateral bonds in $X(i^*)$. The probabilities for the event $R_{e^*,f^*,g^*,h^*}$ are $\frac{439}{18144}$ and $\frac{289}{12096}$ (these numbers can be calculated by hand with some effort, listing the favorable cases among the $9!$ permutations involved) for the cases I and II, respectively. Then regardless of case I or II, it holds that

\begin{equation}\label{newinter4}
\begin{array}
[c]{l}%
P(e^*,f^*,g^*,h^*\mbox{ are closed in }\omega^*_t)\leq P(\cup_{j\in \{e,f,g,h\}} (U(j)>t)) +\\
P\{(\cap_{j\in \{e,f,g,h\}} (U(j)\leq t))\cap R_{e^*,f^*,g^*,h^*}\}\ \leq
1-t^4 +\frac{439}{18144}\\ \leq \left((1-t^4)^{\frac{1}{4}}+p\right)^4,
\end{array}
\end{equation}
where $p=(\frac{439}{18144})^{\frac{1}{4}}$.

\begin{figure}
    \begin{subfigure}[b]{0.3\textwidth}
\begin{tikzpicture}[scale=1.5]
\draw[line width=.05cm] (-1.2,0)--(-1,0)--(0,0)--(0,1)--(0,1.3);
\draw[line width=.05cm] (1,-1.2)--(1,-1)--(1,0)--(2,0)--(2.2,0);
\draw[gray](-3/2,-1/2)--(-1/2,-1/2)--(1/2,-1/2)--(1/2,1/2)--(3/2,1/2)--(3/2,1/2)--(3/2,-1/2)--(1/2,-1/2)--(1/2,-3/2);
\draw[gray](-1/2,-1/2)--(-1/2,1/2)--(1/2,1/2);
\draw[dashed,gray](-1/2,-1/2)--(-1/2,-3/2);
\draw[dashed,gray](3/2,1/2)--(3/2,3/2);
\draw(1/2-0.4,1/2+0.1)node{$e^*$};
\draw(-1/2+0.1,-1/2+0.4)node{$h^*$};
\draw(1/2+0.4,-1/2+0.1)node{$f^*$};
\draw(1+0.4,0+0.1)node{$g^*$};
\fill  (-1,0) circle (0.05 cm)(-1,0) circle (0.05 cm)(0,0)
 circle (0.05 cm) (0,1)  circle (0.05 cm)  (1,-1) circle (0.05 cm) (1,0) circle (0.05 cm) (2,0) circle (0.05 cm);
\fill[gray] (-1/2,1/2) circle (0.03 cm) (1/2,1/2) circle (0.03 cm) (3/2,1/2) circle (0.03 cm)
(1/2,-1/2) circle (0.03 cm) (-1/2,-1/2) circle (0.03 cm) (3/2,-1/2) circle (0.03 cm);
\end{tikzpicture}\caption{Case I\label{beijinho}}
           \end{subfigure}
          ~~~~~~~~~~~~~~~~~~~~~~~~~~~~~~ 
\begin{subfigure}[b]{0.3\textwidth}
  \begin{tikzpicture}[scale=1.5]
\draw[line width=.05cm] (-1.2,0)--(-1,0)--(0,0)--(0,-1)--(0,-1.3);
\draw[line width=.05cm] (1,1.2)--(1,1)--(1,0)--(2,0)--(2.2,0);
\draw[gray](-3/2,1/2)--(-1/2,1/2)--(1/2,1/2)--(3/2,1/2)--(3/2,-1/2)--(1/2,-1/2)--(-1/2,-1/2)--(-1/2,1/2);
\draw[gray](1/2,1/2)--(1/2,-1/2)--(1/2,-3/2);
\draw[dashed,gray](-1/2,1/2)--(-1/2,3/2);
\draw[dashed,gray](3/2,-1/2)--(3/2,-3/2);
\draw(1/2+1-0.4,1/2+0.1)node{$f^*$};
\draw(-1/2+0.6,-1/2+0.1)node{$e^*$};
\draw(3/2+0.1,-1/2+0.4)node{$g^*$};
\draw(0-0.4,0.1)node{$h^*$};
\fill  (-1,0) circle (0.05 cm)(-1,0) circle (0.05 cm)(0,0)
 circle (0.05 cm) (0,-1)  circle (0.05 cm)  (1,1) circle (0.05 cm) (1,0) circle (0.05 cm) (2,0) circle (0.05 cm);
\fill[gray] (-1/2,1/2) circle (0.03 cm) (1/2,1/2) circle (0.03 cm) (3/2,1/2) circle (0.03 cm)
(1/2,-1/2) circle (0.03 cm) (-1/2,-1/2) circle (0.03 cm) (3/2,-1/2) circle (0.03 cm);
\end{tikzpicture}\caption{Case II\label{beijinho}}
    \end{subfigure}
    \caption{The black bonds belong to the circuit $\gamma$ and the gray bonds correspond to the bonds $e,f,g,h,X(e^*),X(f^*),X(g^*)$ and $X(h^*)$. The dotted bonds are the deleted frontal bonds in $X(g^*)$ and $X(h^*)$.}
\end{figure}
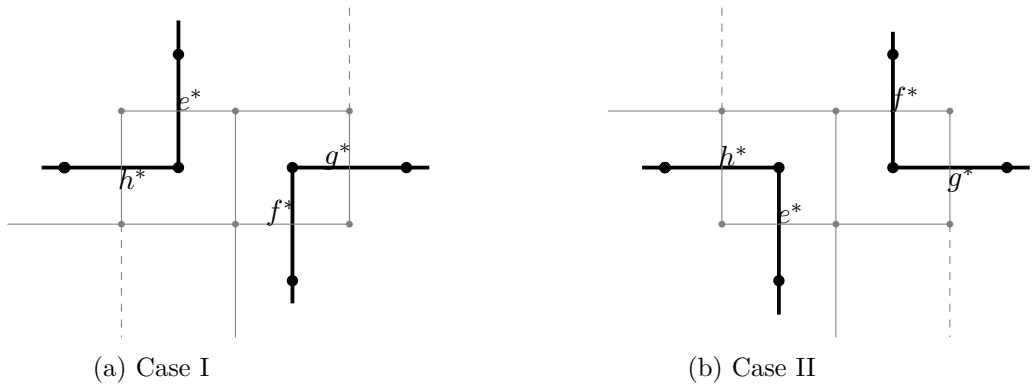

By Inequalities \ref{soz}, \ref{inter2} and \ref{newinter4}, it holds that
\begin{equation}\label{cotaquina}
P(e^*\mbox{ is closed in }\omega^*_t,\forall e^*\in C(\gamma) )\leq \left((1-t^4)^{\frac{1}{4}}+p\right)^{\#C}.
\end{equation}

Remembering that $X(e^*)\cap X(f^*)=\emptyset,\forall e^*\in C(\gamma), f^*\in A(\gamma)\cup B(\gamma)$, by Inequalities \ref{cotanaoquina} and \ref{cotaquina}, it holds that
\begin{equation}\nonumber
\begin{array}{l}P(\gamma\mbox{ is closed in }\omega^*_t)\leq P\left(\cap_{e^*\in A\cup B\cup C}(e^*\mbox{ is closed in }\omega^*_t)\right)\\
\leq \big(\frac{2}{3}+\epsilon\big)^{\#B}\big(p +\epsilon\big)^{\#C}\epsilon^{\#A},
\end{array}
\end{equation} 
where $\epsilon=(1-t)^{\frac{1}{9}}$. This proves Inequality \ref{abc} and thus concludes the proof of Lemma \ref{lema1} for circuits $\gamma$ with $r(\gamma)>4$ or $m(\gamma)\neq 2$. 

The case where the circuit $\gamma$ is in such a way $r(\gamma)=4$ and $m(\gamma)= 2$, we have that $\#B(\gamma)=n(\gamma)-6$ and by definition of $X(e^*)$ for $e^*\in B(\gamma)$ (see Equation (\ref{Xinterno})), it holds that $\#X(e^*)=1$, then 
$$P(e^*\mbox{ is closed in }\omega^*_t)\leq \left(\frac{1}{2}+(1-t)\right),\forall e^*\in B(\gamma),$$ implying that

$$P(\gamma\mbox{ is closed in }\omega^*_t)\leq P(e^*\mbox{ is closed in }\omega^*_t,\forall e^*\in B(\gamma) )\leq \left(\frac{1}{2}+(1-t)\right)^{n(\gamma)-6}.$$

\begin{figure}
\centering
\begin{tikzpicture}[scale=1.5]
\draw(0.8,1)node[above]{$e^*$};
\draw(0,3/2)node[above]{$X(e^*)$};
\draw[line width=.05cm] (-3,1)--(-3,0)--(-2,0)--(-1,0)--(0,0)--(1,0);
\draw[line width=.05cm](1,0)--(2,0)--(3,0)--(3,1);
\draw[line width=.05cm](-3,1)--(-2,1)--(-1,1)--(0,1)--(1,1)--(2,1)--(3,1);
\fill (-2,0) circle (0.05 cm) (-1,0)circle (0.05 cm)  (-1,1)circle (0.05 cm)(0,0)circle(0.05cm)(1,0)circle(0.05cm);
\fill (1,1) circle (0.05 cm) (1,0) circle (0.05 cm) (2,0) circle (0.05 cm)(3,0) circle (0.05 cm)(-3,0) circle (0.05 cm);
\fill (-3,1)circle (0.05) (-2,1) circle (0.05) (0,1) circle (0.05 cm) (1,1) circle (0.05 cm) (2,1) circle (0.05 cm)(3,1) circle (0.05 cm)(-3,1) circle (0.05 cm);
\draw[gray](1/2,1/2)--(1/2,1/2+1);
\draw[gray](1/2,3/2)--(-1/2,3/2);
\fill[gray] (1/2,3/2) circle (0.03 cm);
\end{tikzpicture}\caption{The black edges belong to the circuit $\gamma$ and the gray edges correspond to the bonds $e$ and $X(e^*)$.
If $e^*$ is closed then $U(e)>\max_{f\in X(e^*)}{U(f)}$. Therefore $P(e^* \mbox{ is closed} )\leq\frac{1}{2}+(1-t)$.
\label{2sobre15}}
\end{figure}
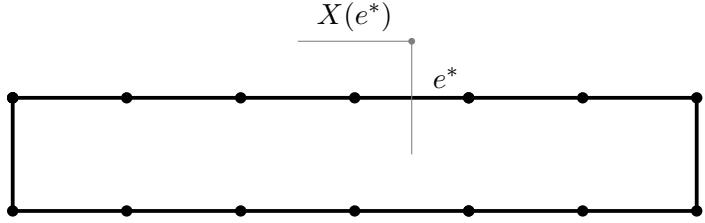

This finishes the proof of Lemma \ref{lema1}.

\subsubsection{Proof of Lemma \ref{lema2}}

Given any finite path $\gamma=\langle x_0,x_1,\dots,x_n\rangle$, let $x^*(\gamma)$ be the lowest vertex in $\gamma$ in the lexicographical order, then $\#\{x^*(\gamma); \gamma\in\Gamma_{n,r,m}\}\leq \frac{n^2}{4}$. 

To start a path $\gamma$ from $x^*(\gamma)$, we have $\binom{r}{m}$ options to choose which sides will have length one, the number of options to choose the lengths of the other sides is $\binom{n-r-1}{r-m-1}$ (the number of integer solutions of $s_1+\dots+s_{r-m}=n-m,\ s_i\geq 2,\forall i$) and after each side there are at most 2 options for the direction of the next one. Note that we are counting paths that are not circuits as well as paths with self-intersections.

\section{Uniqueness of the infinite cluster in the supercritical phase}

The study of the number of infinite clusters in percolation theory was initiated by Harris \cite{Ha}, he showed that, in the supercritical phase, the Bernoulli percolation model on $\mathbb{L}^2$ has only one infinite cluster. In 1981, Newman and Schulman \cite{NS} defined the concept of the {\it finite-energy condition} for a percolation process and showed that under this condition the number of infinite clusters is, almost surely, 0, 1 or $\infty$. In 1987,  Aizenman, Kesten and Newman \cite{AKN} excluded the possibility that there are $\infty$ infinite clusters; Burton and Keane gave a very elegant proof of this fact in the paper \cite{BK}, they also use the finite-energy property. Finally, in 1992, Gandolfi, Keane and Newman \cite{GKN} generalized the result of Burton and Keane for processes that have {\it finite positive energy}, today this property is called {\it insertion tolerance} (see \cite{BLPS} or Chapter 7 of \cite{LP}). For more about uniqueness and non-uniqueness of infinite clusters in percolation theory see the survey \cite{HJ}.

The Constrained-degree percolation model doesn't have the insertion tolerance property, nevertheless we can prove the uniqueness of the infinite cluster in the supercritical phase in some situations, as in the following theorem:

\begin{theorem}\label{unico}
Consider the Constrained-degree percolation process on $\mathbb{L}^2$ with restriction $\kappa(v)=3,\ \forall v\in\mathbb{Z}^2$. If $P\{0\leftrightarrow\infty \text{ in } t\}>0$ then $\omega_t$ has, $P$-a.s., exactly one infinite open cluster.
\end{theorem}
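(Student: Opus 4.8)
\emph{Outline and first reduction.} The plan is to run a Burton--Keane style argument, using as a substitute for the (absent) insertion tolerance the only ``finite energy'' the model possesses: the clocks $(U_e)_{e\in\E}$ form a product measure, so clocks in a finite edge-set are independent of the clocks in the complementary edge-set. First I would observe that, since $\kappa\equiv 3$ is invariant under every translation of $\Z^2$, the map $(U_e)_{e\in\E}\mapsto\omega_t$ is $\Z^2$-equivariant, and the product Lebesgue law of $(U_e)$ is translation ergodic; hence the law of $\omega_t$ is translation invariant and ergodic, and the number $N$ of infinite open clusters of $\omega_t$ is $P$-a.s.\ a constant in $\{0,1,2,\dots\}\cup\{\infty\}$. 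Since $\theta^{\mathbb{L}^2,(3)}(t)>0$ we have $N\geq 1$, so it remains to exclude $N\geq 2$.

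\emph{A positive-probability surgery.} Next I would fix a box $B=B_K$, write $\E=\E(B)\sqcup\E(B)^{c}$, and consider a finite tree $T\subset\E(B)$ in which every vertex has $T$-degree at most $3$ and at most one vertex has $T$-degree exactly $3$. On the event (measurable with respect to $(U_e)_{e\in\E(B)}$ only) that $U_e>t$ for all $e\in\E(B)\setminus T$ while the edges of $T$ receive very small clocks, increasing along $T$ from its leaves toward the exceptional vertex, the edges of $\E(B)\setminus T$ play no role up to time $t$; a direct check of the degree constraint — interior path-vertices of $T$ reach degree $2$, the exceptional vertex reaches degree $3=k$ precisely on the last of its three clocks (which rings when it has degree $2<3$), and a non-corner leaf of $T$ lying on $\partial B$ has exactly one edge leaving $B$, hence still has room — then shows that every edge of $T$ opens, so $\omega_t$ restricted to $\E(B)$ equals $T$. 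This event has positive probability; moreover, with the ``boundary data'' (which $\partial B$-vertices are leaves of $T$) held fixed, on this event $\omega_t$ restricted to $\E(B)^{c}$ coincides with the Constrained-degree configuration on the graph $\mathbb{L}^2\setminus\E(B)$ (the leaf vertices carrying one harmless extra open edge), so it is a deterministic function of $(U_e)_{e\in\E(B)^{c}}$ and hence \emph{independent} of the surgery event above.

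\emph{Trifurcations and merging.} Assuming $N\geq 2$, the key input (discussed below) is that for $K$ large the model on $\mathbb{L}^2\setminus\E(B_K)$ has, with positive probability, at least three pairwise distinct infinite clusters meeting $\partial B_K$. When $N=\infty$ I would take $T$ to be a tripod joining three such contact vertices $v_1,v_2,v_3$ to the origin (the origin being the exceptional vertex, of $T$-degree $3$), and conclude from the independence of the surgery that $P(\text{the origin is a trifurcation})>0$; the standard Burton--Keane count then contradicts $N=\infty$, since the number of trifurcations inside $B_n$ is $O(|\partial B_n|)$ while the ergodic theorem forces it to be of order $|B_n|$. When $2\leq N<\infty$ I would instead let $T$ be a bridge (or a $Y$) merging two (or three) of these infinite clusters, which produces with positive probability configurations having $N-1$ (or $N-2$) infinite clusters — incompatible with $N$ being $P$-a.s.\ a single finite value. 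Either way $N=1$.

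\textbf{Main obstacle.} The hard part will be the input to the previous step: showing that deleting the finitely many edges of $\E(B_K)$ leaves, with positive probability, at least three (respectively two) distinct infinite clusters meeting $\partial B_K$. Because the dynamics are genuinely non-monotone — the paper itself stresses there is ``no obvious monotonicity'', and indeed closing an edge can open a neighbour, which can close a further edge, and so on — this cannot come from a monotone stochastic comparison; instead I expect one must control how far this ``domino effect'' triggered by the deletion can propagate, showing that with positive probability it dies out before reaching a large annulus around $B_K$, so that the number of infinite clusters crossing that annulus is unchanged. This is the step where planarity of $\mathbb{L}^2$ and the bound $k=3$ really have to be used; everything else is the classical Burton--Keane and Newman--Schulman machinery transplanted onto the product-of-clocks finite-energy substitute described above.
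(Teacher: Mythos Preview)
Your skeleton is exactly the Newman--Schulman / Burton--Keane route the paper takes, and your decoupling observation --- that on the surgery event the outside configuration depends only on the outside clocks, since a leaf of $T$ at a non-corner boundary vertex contributes degree $1<3$ and so never obstructs the single outgoing edge --- is correct. The genuine gap is precisely your ``Main obstacle'': you end up needing that the constrained-degree process on the \emph{modified} graph $\mathbb{L}^2\setminus\E(B_K)$ has, with positive probability, at least three infinite clusters touching $\partial B_K$, and you only know this for the \emph{original} process $\omega_t$ on $\mathbb{L}^2$. Your domino-effect plan would amount to proving these two processes agree outside a large annulus with positive probability, which you do not make concrete.

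The paper avoids this obstacle rather than solving it head-on. Instead of comparing to a process on a smaller graph, it first conditions on the open/closed pattern $A$ realised on the interface $B_{n,n+1}=\{\langle x,y\rangle:x\in\partial B_n,\ y\in\partial B_{n+1}\}$ and on all interface clocks lying in $(\delta,1-\delta)$; it then replaces only the clocks in $\E(B_n)$ by extreme values tailored to $A$ (clock $\leq\delta$ on all of $\partial B_n$ and on the ``opposite'' edge $e^o$ of every closed interface edge $e\notin A$; clock $>1-\delta$ elsewhere in $\E(B_n)$). The heart of the argument is to show that this replacement leaves $\omega_{t,e}$ literally \emph{unchanged} for every $e\in\E(B_n)^c$. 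This is proved by an edge-by-edge induction along the a.s.\ finite connected components of $\{e:U_e\leq\tfrac12\}$ and then of $\{e:\tfrac12<U_e\leq t\}$; the finiteness comes from Harris--Kesten, and the case split is available because the hypothesis $\theta(t)>0$ forces $t>t_c>\tfrac12$ by the lower bound of Theorem~\ref{sq}. Once the outside is preserved, the infinite clusters that touched $B_n$ before the surgery still touch it after, and the newly opened circuit $\partial B_n$ merges them --- contradicting $N=m\geq 2$, and producing furcations (handled via the forest version of Burton--Keane in Lyons--Peres) when $N=\infty$. So the missing idea is not a propagation bound but a surgery that fixes the interface pattern first and thereby makes the outside configuration \emph{provably invariant} under the inside resampling; the tool that makes this work is $t>\tfrac12$, not planarity directly.
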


\begin{proof}
Let $N_t$ be the number of infinite clusters in the temporal-configuration $\omega_t$, as the measure $P$ is translation-invariant and ergodic and the random variable $N_t$ is also translation invariant, it holds that, $P$-a.s., $N_t$ is constant. The proof of Theorem \ref{unico} is done in two steps; in the first step we exclude the case $N_t=m,\ m\geq 2$, in the second one we exclude the case $N_t=\infty$. 

\begin{proposition}\label{maisque2}If $2\leq m<\infty$ then $P(N_t=m)=0$.
\end{proposition}

We will use the ideas of Newman and Schulman \cite{NS}, but note that the distribution of $\omega_t$ hasn't the finite-energy property. 

\begin{proof}[Proof of Proposition \ref{maisque2}]
Suppose that there exists $2\leq m<\infty$ such that 
\begin{equation}\label{Absurdo}
P(N_t=m)=1.
\end{equation}
For any $n\in\N$ and $l=n-1,n,n+1$, let $B_n$ be the box $[-n,n]^2$ and define the following set of bonds: $$B_{n,l}:=\{\langle x,y\rangle;x\in \partial B_n,\ y\in\partial B_l\},$$ $$I_n:=\{e\in B_{n,n};e\cap\{(n,n),(-n,n),(n,-n),(-n,-n)\}\neq\emptyset\}$$ and $$E_n:=\{e\in B_{n,n+1}; e\cap\{(n,n),(-n,n),(n,-n),(-n,-n)\}\neq\emptyset\}.$$

Define the following set of random times:

$$\mathcal{A}_n^t:=\{U\in [0,1]^\E;\mbox{all infinite clusters in }\omega_t(U)\mbox{ intersect }B_n\}.$$

At this point Newman and Schulman use the finite-energy property to open edges inside the box $B_n$, merging all infinite clusters and obtaining a contradiction. As mentioned before, our model has not the finite-energy property and we overcome this by choosing a special subset of $\mathcal{A}_n^t$, that has positive probability and forcing all edges of $\partial B_n$ to be open, merging all infinite clusters and keeping the probability of occurrence positive.

For this purpose, we define other sets of random times:

$$\mathcal{B}_n^t:=\{U\in[0,1]^{\mathbb{E}}; \omega_{t,e}(U)=1\ \forall e\in I_n\}$$ and for $\delta\in (0,1)$
$$\mathcal{C}_{\delta,n}:=\{U\in[0,1]^{\mathbb{E}}; \delta<U_e\leq 1-\delta, \forall e\in B_{n,n+1}\}.$$
Observe that $P(\mathcal{B}_n^t)>\alpha(t)$ for all $n$, where $\alpha(t)$ is a positive constant that depends only on $t$. By Equation (\ref{Absurdo}), there exists $n$ large enough such that  $P(\mathcal{A}_n^t)>1-\alpha(t)$, then it holds that 
\begin{equation}\label{AeB}
P(\mathcal{A}_n^t\cap \mathcal{B}_n^t)>0.
\end{equation} 

Given a set of bonds $A\subset B_{n,n+1}$, define
\begin{equation}\label{H}
\mathcal{H}_{n,A}^t:=\{U\in[0,1]^{\mathbb{E}}; \omega_{t,e}(U)=1,\forall e\in A\mbox{ and }\omega_{t,e}(U)=0,\forall e\in B_{n,n+1}/A\}.
\end{equation}

As $\{\mathcal{H}_{n,A}^t; A\subset B_{n,n+1}\}$ is a partition of $[0,1]^\E$, by Inequality (\ref{AeB}) there exists $A\subset B_{n,n+1}$ such that 
\begin{equation}\label{ABH}
P\left(\mathcal{A}_n^t\cap \mathcal{B}_n^t\cap\mathcal{H}^t_{n,A}\right)>0.
\end{equation}
For simplicity we write $\mathcal{D}_{n,A}^t:=\mathcal{A}^t_n\cap\mathcal{B}^t_n\cap\mathcal{H}^t_{n,A}$.

\begin{figure}
    \centering
    \begin{tikzpicture}[scale=0.5]
    \draw[line width=.03cm,gray,dashed] (-4,2)--(-3,2);
\draw(-5+1/2,2)node[above] {$e$};
\draw(-4+1/2,2)node[above] {$e^o$};
\filldraw (0,0) circle (2pt);
\draw (0.2,0.3)node {$0$};
\draw (2,-7)node[below] {$\infty$};
\draw[line width=.05cm,->] (1,-5)--(2,-5)--(2,-6)--(2,-7);
\draw (-7,-4)node[left] {$\infty$};
\draw[line width=.05cm,->] (-5,-3)--(-6,-3)--(-6,-4)--(-7,-4);
\draw (7,0)node[right] {$\infty$};
\draw[line width=.05cm,->] (5,1)--(5,0)--(6,0)--(7,0);
\draw[line width=.05cm] (-3,4)--(-3,5);
\draw[line width=.05cm] (-2,4)--(-2,5);
\draw[line width=.05cm] (-1,4)--(-1,5);
\draw[line width=.05cm] (0,4)--(0,5);
\draw[line width=.03cm,gray] (1,4)--(1,5);
\draw[line width=.03cm,gray] (2,4)--(2,5);
\draw[line width=.05cm] (3,4)--(3,5);
\draw[line width=.03cm,gray] (4,4)--(4,5);
\draw[line width=.03cm,gray] (4,-4)--(5,-4);
\draw[line width=.03cm,gray] (4,-3)--(5,-3);
\draw[line width=.05cm] (4,-2)--(5,-2);
\draw[line width=.03cm,gray] (4,-1)--(5,-1);
\draw[line width=.03cm,gray] (4,-0)--(5,0);
\draw[line width=.05cm] (4,1)--(5,1);
\draw[line width=.05cm] (4,2)--(5,2);
\draw[line width=.05cm] (4,3)--(5,3);
\draw[line width=.03cm,gray] (4,4)--(5,4);
\draw[line width=.03cm,gray] (-4,4)--(-4,5);
\draw[line width=.03cm,gray] (-3,-4)--(-3,-5);
\draw[line width=.05cm] (-2,-4)--(-2,-5);
\draw[line width=.05cm] (-1,-4)--(-1,-5);
\draw[line width=.03cm,gray] (0,-4)--(0,-5);
\draw[line width=.05cm] (1,-4)--(1,-5);
\draw[line width=.03cm,gray] (2,-4)--(2,-5);
\draw[line width=.03cm,gray] (3,-4)--(3,-5);
\draw[line width=.03cm,gray] (4,-4)--(4,-5);
\draw[line width=.05cm](-4,-4)rectangle(4,4);
\draw[line width=.03cm,gray] (-4,-4)--(-4,-5);
\draw[line width=.03cm,gray] (-4,-4)--(-5,-4);
\draw[line width=.05cm] (-4,-3)--(-5,-3);
\draw[line width=.05cm] (-4,-2)--(-5,-2);
\draw[line width=.03cm,gray] (-4,-1)--(-5,-1);
\draw[line width=.05cm] (-4,0)--(-5,0);
\draw[line width=.05cm] (-4,1)--(-5,1);
\draw[line width=.05cm] (-4,2)--(-5,2);
\draw[line width=.03cm,gray] (-4,3)--(-5,3);
\draw[line width=.03cm,gray] (-4,4)--(-5,4);
    \end{tikzpicture}\caption{The black bonds represent the set $A\subset B_{n,n+1}$ of open bonds at time $t$ in the event $\mathcal{H}_{n,A}^t$.}
		\label{ezero}
    \end{figure}
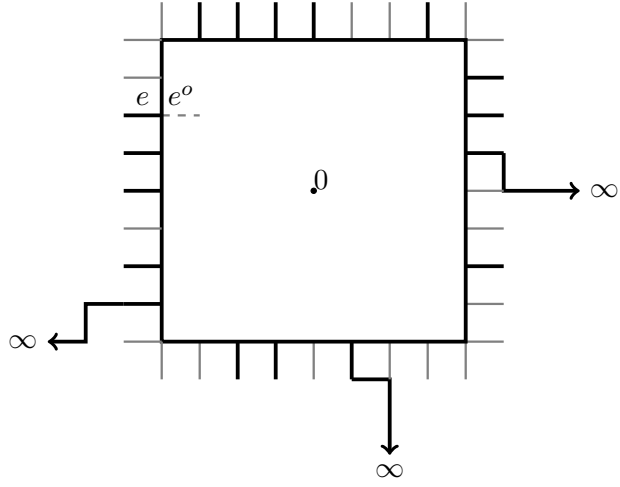

Since $\displaystyle\lim_{\delta\rightarrow0^+}P(\mathcal{C}_{\delta,n})=1$, by Equation (\ref{ABH}), there is $\delta<t$ small enough such that 
\begin{equation}\label{CD}
P(\mathcal{C}_{\delta,n}\cap\mathcal{D}_{n,A}^t)>0.
\end{equation}

If $e\in B_{n,n+1}/E_n$ where  $e=\langle x, x+u\rangle$ and $x\in B_n$, we define $e^o=\langle x,x-u\rangle$ as the {\it opposite edge of $e$} (see Figure \ref{ezero}); for $A\subset B_{n,n+1}/E_n$ we denote the sets of bonds $A^o=\{e^o\colon e\in A\}$ and $A^f=(B_{n,n+1}/A)^o\cup B_{n,n}$. Define the following set of random times on $\E(B_n)$:
$$\mathcal{E}_{n,\delta,A}:=\{U\in [0,1]^{\E(B_n)}; U_e\leq\delta,\forall e\in A^f,U_e>1-\delta, e\in \mathbb{E}(B_n)/A^f\}.$$

Given $F\subset\mathbb{E}$ (finite or not) denote by $\Pi_F$ the canonical projection from $[0,1]^{\mathbb{E}}$ onto $[0,1]^F$. Note that $\mathcal{E}_{n,\delta,A}$ can be considered a cylinder event, then,  by Equation (\ref{CD}), we have that 
{\small \begin{equation}\label{desacoplamento}
P\left(\mathcal{E}_{n,\delta,A}\times\Pi_{\mathbb{E}(B_n)^c}(\mathcal{C}_{n,A}^t\cap\mathcal{D}_{\delta,n})\right)=P\left(\mathcal{E}_{n,\delta,A}\right)P\left(\Pi_{\mathbb{E}(B_n)^c}(\mathcal{C}_{n,A}^t\cap\mathcal{D}_{\delta,n})\right)>0.
\end{equation}}

For simplicity let us denote $\mathcal{F}_{n,\delta,A}=\mathcal{E}_{n,\delta,A}\times\Pi_{\mathbb{E}(B_n)^c}(\mathcal{C}_{n,A}^t\cap\mathcal{D}_{\delta,n})$. The proof of this proposition is concluded if we show that, for each random time $U\in\mathcal{F}_{n,\delta,A}$, it holds that $\omega_t(U)$ has an unique infinite cluster. Observe that by definition of $\mathcal{E}_{n,\delta,A}$, it holds that 

\begin{equation}\label{CarasQueAbremPrimeiro}
\omega_{t,e}(U)=1, \forall e\in A^f, \forall U\in\mathcal{F}_{n,\delta,A}.
\end{equation}
 
Thus, it is enough to show that given such $U\in\mathcal{F}_{n,\delta,A}$, for all $\tilde{U}\in\mathcal{C}_{n,A}^t\cap\mathcal{D}_{\delta,n}$ with $\tilde{U}_e=U_e,\forall e\in\mathbb{E}(B_n)^c$, it holds that  
\begin{equation}\label{mudancalocal}
\omega_{t,e}(\tilde{U})=\omega_{t,e}(U),\forall e\in\mathbb{E}(B_n)^c.
\end{equation} 

Given $a,b\in[0,1]$ and $U\in[0,1]^\E$, define the set $G_{a,b}(U):=\{e\in\mathbb{E}; a<U_e\leq b\}$. 

For $t\in[0,1]$ fixed, if $e\in G_{t,1}(U)\cap\mathbb{E}(B_n)^c$, it holds that $\omega_t(U,e)=0=\omega_t(\tilde{U},e)$, yielding Equation (\ref{mudancalocal}). 

Observe that by hypothesis (\ref{Absurdo}) $t>1/2$, then it is enough to prove Equation (\ref{mudancalocal}) for $e\in G_{0,1/2}(U)\cap\mathbb{E}(B_n)^c$ and for $e\in G_{1/2,t}(U)\cap\mathbb{E}(B_n)^c$. 

Recalling that all connected components of $G_{0,1/2}(U)$ are finite, for every $e\in G_{0,1/2}(U)\cap\mathbb{E}(B_n)^c$, let $\{e_1,\dots ,e_m\}$ be its  connected component in the random graph $G_{0,1/2}(U)$ ordered in such way that $U(e_1)<U(e_2)<\cdots <U(e_m)$.

We will use induction to verify that Equation (\ref{mudancalocal}) is true for all bonds in $\{e_1,\dots ,e_m\}$.

Let us consider the initial bond. If $e_1\notin B_{n,n+1}$, it holds that $\{f\in\E ;f\sim e_1\}\subset\mathbb{E}(B_n)^c$ (from now on $f\sim e$ means that the bonds $e$ and $f$ share a common end-vertex), then $\tilde{U}(f)=U(f)$ and $U(f)>U(e_1)$ implying that $\omega_{t,e_1}(U)=\omega_{t,e_1}(\tilde{U})=1$.

If $e_1\in B_{n,n+1}$, we consider two cases. The first case is when $e_1\notin A$ (the set $A$ in the definitions of $\mathcal{H}_{n,A}^t$); for all $\tilde{U}\in\mathcal{H}_{n,A}^t$, it holds that $\omega_{t,e_1}(\tilde{U})=0$; let $x_1$ be the end-vertex of $e_1$ that belongs to $B_n$, if $x_1\in f$ and $f\neq e_1$, by Equation \ref{CarasQueAbremPrimeiro}, $\omega_{t,f}(U,)=1$ thus $\omega_{t,e_1}(U)=0$, concluding this case. The second case is $e_1\in A$; for all $\tilde{U}\in\mathcal{H}_{n,A}^t$, it holds that $\omega_{t,e_1}(\tilde{U})=1$; let $y_1$ be the end-vertex of $e_1$ that belongs to $B_{n+1}$, then we have that 
\begin{equation}\label{e_1PodeAbrirPorUmLado}
\deg\left(y_1,\omega_s(U)\right)=0,\ \forall s<U(e_1),
\end{equation}
for all $\displaystyle U\in\mathcal{E}_{n,\delta,A}\times[0,1]^{\mathbb{E}(B_n)^c}$ it holds that $U(e_1^o)>1-\delta$, then  
\begin{equation}\label{PeloOutroLadoDeE_1}
\deg(x_1,\omega_s(U))\leq 2,\ \forall s<U(e_1).
\end{equation} 
Therefore, by Equations \ref{e_1PodeAbrirPorUmLado} and \ref{PeloOutroLadoDeE_1}, $\omega_{t,e_1}(U)=1$. This proves the first step of the induction.

Suppose now that Equation (\ref{mudancalocal}) is satisfied for all bonds in $\{e_1,\dots,e_i\}$, and assume that $e_{i+1}\in B_{n,n+1}$, we consider two cases as we did for $e_1$. If $e_{i+1}\notin A$, we proceed like in the case $e_1\notin A$. If  $e_{i+1}\in A$,  let us denote $e_{i+1}=\langle x_{i+1},y_{i+1}\rangle$, with $x_{i+1}\in B_n$ and $y_{i+1}\in B_{n+1}$; for all $\displaystyle U\in\mathcal{E}_{n,\delta,A}\times[0,1]^{\mathbb{E}(B_n)^c}$ it holds that $U(e_{i+1}^o)>1-\delta$, then  

\begin{equation}\label{umlado1}
\deg(x_{i+1},\omega_s(U))\leq 2,\ \forall s<U(e_{i+1}).
\end{equation} 
For all $\tilde{U}\in\mathcal{H}_{n,A}^t$, it holds that $e_{i+1}$ is open in $\omega_{t}(\tilde{U})$, then there exists a bond $f$ with $y_{i+1}\in f$ such that $\omega_{t,f}(\tilde{U})=0$. If $f\in\{e_1,\ldots,e_i\}$, by induction hypothesis,  $\omega_{t,f}(U)=\omega_{t,f}(\tilde{U})=0$; if $f\notin\{e_1,\ldots,e_i\}$, we have that $U(f)>U(e_{i+1})$; anyway it holds that
\begin{equation}\label{outrolado1}
\deg(y_{i+1},\omega_s(U))\leq 2,\ \forall s<U(e_{i+1}).
\end{equation}
Then, by Equations (\ref{umlado1}) and (\ref{outrolado1}), $\omega_{t,e_{i+1}}(U)=\omega_{t,e_{i+1}}(\tilde{U})=1$. 

Now assume that $e_{i+1}\notin B_{n,n+1}$. Consider the edges  $\{f\in\E; f\sim e_{i+1}\}\subset\mathbb{E}(B_n)^c$. If $U(f)<U(e_{i+1})$, then $f\in\{e_1,\dots,e_i\}$, and Equation \eqref{mudancalocal} holds for $f$. If $U(f)>U(e_{i+1})$, then $\omega_{s,f}(U)=\omega_{s,f}(\tilde{U})=0$ for $s< U(e_{i+1})$.
We conclude that for any endvertex of $e_{i+1}$, say $x$, we must have 

$$\deg(x,\omega_s(U))=\deg(x,\omega_s(\tilde{U})),\;\; \forall s<U(e_{i+1}),$$
therefore $\omega_{t,e_{i+1}}(U)=\omega_{t,e_{i+1}}(\tilde{U})$.

This proves Equation (\ref{mudancalocal}) for all $e\in G_{0,1/2}(U)\cap\mathbb{E}(B_n)^c$.

%

Now, let   $\{e_1,\dots ,e_m\}$ be a connected component in the random graph $G_{1/2,t}(U)$ (it is also finite because $P(e_i\in G_{1/2,t}(U))\leq\frac{1}{2},\forall i$),  such that $U(e_1)<U(e_2)<\cdots <U(e_m)$. To prove that Equation (\ref{mudancalocal}) holds for the edge $e_{i}$, we proceed as in the preceeding induction step using the fact that Equation (\ref{mudancalocal}) holds for $\{e_1,\dots,e_{i-1}\}\cup G_{1/2,t}(U)$. 

This concludes the proof of Proposition \ref{maisque2}.


\comment{
If the initial bond $e_1\notin B_{n,n+1}$, it holds that $\{f\in\E ;f\sim e_1\}\subset\mathbb{E}(B_n)^c$. Suppose that there is an end-vertex of $e_1=\langle x_1,y_1\rangle$, let's say $x_1$, such that all of the three edges sharing $x_1$ with $e_1$ belong to $G_{0,1/2}(U)$ and are open in $\omega_t(U)$. In this case $\omega_{t,e_1}(U)=0$. As we have already shown that all edge in $G_{0,1/2}(U)$ satisfies the Equation \ref{mudancalocal}, then these three edges are also open in $\omega_t(\tilde{U})$, thus $\omega_{t,e_1}(\tilde{U})=\omega_{t,e_1}(U)=0$. Otherwise $\deg(u,\omega_{1/2}(U))=\deg(u,\omega_{1/2}(\tilde{U}))\leq 2,\forall u\in\{x_1,y_1\}$, since $U(e_1)=\min\{U(e_i); i=1,\cdots,m\}>1/2$, it follows that $\omega_{t,e_1}(U)=\omega_{t,e_1}(\tilde{U})=1$.

If the initial bond $e_1\in B_{n,n+1}$, as before we consider two cases depending on $e_1$ belongs or not to the set $A$. If $e_1\notin A$, we follow the same steps when $e_1\in G_{0,1/2}(U)$. If $e_1\in A$, it holds that $\omega_{t,e_1}(\tilde{U})=1,\forall \tilde{U}\in\mathcal{H}_{n,A}^t$; let $y_1$ be the end-vertex of $e_1$ that belongs to $B_{n+1}$, then there exists a bond $f$, with $y_1\in f$ and $\omega_{t,f}(\tilde{U})=0$; if $f\in G_{0,1/2}(U)$ then we have already shown that $\omega_t(U,f)=\omega_t(\tilde{U},f)$, then $f$ is also closed in $\omega_t(U)$; if $f\in G_{1/2,t}$ then $U(f)>U(e_1)$. Anyway, we have that 
\begin{equation}\label{umlado2}
\deg(y_1,\omega_s(U))\leq 2, \forall s<U(e_1).
\end{equation} 
Since $U\in\mathcal{E}_{n,\delta,A}$, it holds that $U(e_{1}^o)>1-\delta$, then 
\begin{equation}\label{outrolado2}
\deg(x_{1},\omega_s(U))\leq 2, \forall s<U(e_{1}).
\end{equation}

Therefore, by Equations \ref{umlado2} and \ref{outrolado2}, $\omega_t(U,e_1)=\omega_t(\tilde{U},e_1)=1$. This prove the first step of the induction. The inductions step is done exactly as in the case $e\in G_{1/2,t}(U)\cap\mathbb{E}(B_n)^c$, this finishes the proof of Equation \ref{mudancalocal} and conclude the proof of Proposition \ref{maisque2}.}


\end{proof}

The next proposition excludes the case $N_t=\infty$.

\begin{proposition}\label{infinito} $$P(N_t=\infty)=0.$$
\end{proposition}

The idea is to use an improvement of the Burton-Keane's technique \cite{BK} that can be found in Section 7.3 of \cite{LP} or originally in \cite{BLPS}, but note that the distribution of $\omega_t$ is not {\it insertion tolerant} ({\it positive finite energy} in the sense of \cite{GKN}). 

\begin{proof}[Proof of Proposition \ref{infinito}] Suppose, by contradiction, that the event $\{N_t=\infty\}$ has probability one. We modify slightly the previous definition of $\mathcal{A}_n^t$. Define the following sets of random times
$$\bar{\mathcal{A}}_n^t:=\{U\in [0,1]^\E;\mbox{at least three infinite clusters in }\omega_t(U)\mbox{ intersect }B_n\},$$
the sets $\mathcal{B}_n^t$, $\mathcal{C}_{\delta,n}$, $\mathcal{E}_{n,\delta,A}$ and $\mathcal{H}_{n,A}^t$ are defined like in Proposition \ref{maisque2}, $\bar{\mathcal{D}}_{n,A}^t:=\bar{\mathcal{A}}^t_n\cap\mathcal{B}^t_n\cap\mathcal{H}^t_{n,A}$ and $\bar{\mathcal{F}}_{n,\delta,A}:=\mathcal{E}_{n,\delta,A}\times\Pi_{\mathbb{E}(B_n)^c}(\mathcal{C}_{n,A}^t\cap\bar{\mathcal{D}}_{\delta,n}).$

Like in Proposition \ref{maisque2} there exists $A\subset B_{n,n+1}$, such that for any $n$ large enough and $\delta$ close to zero, it holds that $P(\bar{\mathcal{F}}_{n,\delta,A})>0$.

Observe that for any $U\in\bar{\mathcal{F}}_{n,\delta,A}$, the configuration $\omega_t(U)$ has a cluster with at least three ends (the number of ends is the supremum, over all finite subsets $K$ of $\E$, of the number of infinite connected compounds of $\omega_t(U) /K$), the cluster of any vertex in $\partial B_n$ has this property.

The fact $P(\bar{\mathcal{F}}_{n,\delta,A})>0$ ensures that we are under the hypothesis of Lemma 7.7 of \cite{LP}. Therefore, there is (on a larger probability space) a random forest $\mathfrak{F}\subset\omega_t(U)$ such that

(a) the distribution of the pair $(\omega_t(U),\mathfrak{F})$ is translation-invariant;

(b) the set of forests $\mathfrak{F}$ that has a component with at least three ends has positive probability.

Let $\mathbb{P}_t$ be the distribution of the pair $(\omega_t(U),\mathfrak{F})$ and $\mathbb{E}_t$ your respective expectation.  

A vertex $v$ is a {\it furcation} of a configuration $\omega$ if closing all edges incident to $v$, the cluster of $v$ splits in at least three infinite clusters. 

Let $X$ be set of the furcations of $\mathfrak{F}$. By (b), it holds that $\mathbb{P}_t(v\in X)>0$ for any $v\in B_n$. By translation invariance, we have that  
\begin{equation}\label{Contradiction}
\mathbb{E}_t[\#(X\cap B_n)]=\#(B_n)\mathbb{P}_t\left(0\in X\right).
\end{equation}
 
It is well known that $\#(\partial B_n)\geq \#(B_n\cap X)$, taking the expectation and using Equation \ref{Contradiction}, we obtain that $\mathbb {L}^d$ is not amenable, a contradiction. This concludes the proof of this proposition.
\end{proof}

Combining Propositions \ref{maisque2} and \ref{infinito}, the proof of Theorem \ref{unico} is concluded.

\end{proof}

\section{Constrained-degree percolation model on regular trees}

For all $d\in\N$, we denote $[d]= \{1,\ldots, d\}$, and define the set
\begin{align*}
[d]_\star = \bigcup_{0\leq n < \infty} [d]^n;
\end{align*}
where $[d]^0$ is understood to consist of a single point $o$, the root of the tree.
Points of $[d]^n$ are represented as $x = (x_1,\ldots, x_n)$, for $a\in [d]$, we define the concatenation $x\cdot a = (x_1,\ldots, x_n,a).$

The regular $d$-ary tree is the graph $\T_{d}=(\V_d,\E_d)$, where $\V_{d} = [d]_\star$ and $\E_{d}=\{\langle x, x\cdot a\rangle; \;x \in \V_{d},\;a\in [d]\}$.

Observe that exactly the same proof of Proposition \ref{k2} works for the Constrained-degree percolation model on $\T_d$. Thus, we have the following statement:  

\begin{proposition}For the Constrained-degree percolation model on the regular $d$-ary tree $\mathbb{T}^d,\ d\geq 2$, it holds that $\theta^{\mathbb{T}^d,(2)}(t)=0,\ \forall t\in[0,1].$
\end{proposition}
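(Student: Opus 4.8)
The plan is to adapt verbatim the argument given for Proposition \ref{k2}, since the only structural features of $\mathbb{L}^d$ used there are that it is an infinite, connected, bounded-degree graph admitting an exhaustion by balls $B_n$ with boundaries $\partial B_n$, together with the elementary counting identity that each vertex has a bounded number of neighbours. The regular $d$-ary tree $\T_d$ has all of these properties, with $B_n$ taken to be the set of vertices at graph distance at most $n$ from the root $o$, and $\partial B_n$ the set of vertices at distance exactly $n$, i.e.\ the $n$-th generation $[d]^n$.

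Concretely, I would first fix $t=1$ (it suffices, since $\theta(t)$ is monotone in the relevant sense, or simply because a configuration at time $t<1$ is dominated by the one at $t=1$ in terms of which bonds are ever open). Then I would define $X_n = \#\{v\in\partial B_n : o\leftrightarrow v \text{ at } t=1\}$ and the filtration $\mathcal{F}_n = \sigma(I_v, v\in B_n)$ with $I_v = \ind_{(o\leftrightarrow v\text{ at }t=1)}$. The key step is to check that $E[X_{n+1}\mid \mathcal{F}_n]\leq X_n$: each vertex $v\in\partial B_n$ with $o\leftrightarrow v$ contributes to $X_{n+1}$ only through its children in generation $n+1$; since $v$ has one edge toward its parent (on the geodesic from $o$), the constraint $k_v=2$ forces $v$ to have degree at most $2$ in the open configuration, hence at most one open edge to the next generation. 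Thus each connected vertex in $\partial B_n$ passes the connection on to at most one vertex of $\partial B_{n+1}$, giving $X_{n+1}\leq X_n$ pointwise on the event that the relevant bonds behave as described — and in expectation $E[X_{n+1}\mid\mathcal{F}_n]\leq X_n$ once we also account for the possibility that the child-edge fails to open. Therefore $(X_n,\mathcal{F}_n)$ is a non-negative supermartingale; being integer-valued it converges almost surely, necessarily to $0$, so $P(o\leftrightarrow\infty \text{ at } t=1)=0$.

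Since the excerpt already asserts that \emph{exactly the same proof of Proposition \ref{k2} works}, the cleanest write-up is simply to say so and point out the single place where the tree structure is even more favourable than $\mathbb{L}^d$ (on the tree the degree constraint is saturated by the single parent edge, so the supermartingale inequality is in fact transparent). I do not anticipate a genuine obstacle: the only point requiring the slightest care is the bookkeeping at the root $o$, which has degree $d$ rather than being an interior vertex with a distinguished parent edge — but this affects only the base case $n=0$ (where $X_0=1$) and is harmless. So the proof reads: the argument of Proposition \ref{k2} applies mutatis mutandis with $B_n$ the ball of radius $n$ around $o$ in $\T_d$, yielding $\theta^{\T_d,(2)}(t)=0$ for all $t\in[0,1]$.

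\begin{proof}
It suffices to treat $t=1$. Let $B_n$ be the set of vertices of $\T_d$ at graph distance at most $n$ from the root $o$, and $\partial B_n$ the set of vertices at distance exactly $n$. Define $X_n=\#\{v\in\partial B_n; o\leftrightarrow v\mbox{ at }t=1\}$ and $\mathcal{F}_n=\sigma(I_v, v\in B_n)$ with $I_v=\ind_{(o\leftrightarrow v\text{ at }t=1)}$. Every vertex $v\in\partial B_n$ with $n\geq 1$ has a unique neighbour in $\partial B_{n-1}$ (its parent) and $d$ neighbours in $\partial B_{n+1}$ (its children). If $o\leftrightarrow v$ then $v$ lies in an open cluster in which its degree is at most $k_v=2$; one of the open edges at $v$ may be the edge to its parent, so at most one open edge joins $v$ to $\partial B_{n+1}$. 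Hence each vertex of $\partial B_n$ that is connected to $o$ contributes at most one connected vertex to $\partial B_{n+1}$, and accounting for the chance that such a child edge fails to open we get $E[X_{n+1}\mid\mathcal{F}_n]\leq X_n$. Thus $(X_n,\mathcal{F}_n)$ is a non-negative supermartingale, and being integer-valued it converges almost surely; the limit must be $0$. Therefore $\theta^{\T_d,(2)}(1)=0$, and the same argument at any $t\in[0,1]$ (or domination by the configuration at $t=1$) gives $\theta^{\T_d,(2)}(t)=0$ for all $t\in[0,1]$.
\end{proof}
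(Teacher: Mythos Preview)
Your proposal is correct and takes exactly the approach the paper intends: the paper's own ``proof'' of this proposition is the single sentence that the argument of Proposition~\ref{k2} carries over verbatim, and you have reproduced that argument with the balls $B_n$ taken around the root of $\T_d$. Your additional remark that on the tree each connected $v\in\partial B_n$ already has its parent edge open---so at most one child edge can be open and hence $X_{n+1}\le X_n$ pointwise---makes the supermartingale inequality even more transparent than in the $\mathbb{L}^d$ case.
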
 

The next result shows that, with the restriction $k_v=3,\forall v\in\T_d$, there is percolation on $\T^d$. 

\begin{theorem}\label{tree}For the Constrained-degree percolation model on $\T_d,\ d\geq2$, it holds that $t_c(\mathbb{T}^d,(3))<1$.
\end{theorem}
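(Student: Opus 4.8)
The plan is to prove $t_c(\mathbb{T}^d,(3))<1$ for every $d\geq 2$ by exhibiting, for $t$ close enough to $1$, an explicit subtree of $\mathbb{T}_d$ along which percolation occurs with positive probability; since $\theta^{\mathbb{T}^d,(3)}(t)$ is a tail-type event one then argues it is in fact positive. Because the degree restriction $k_v=3$ only forbids a vertex from having more than $3$ open incident edges, the natural object to build is a \emph{binary} subtree: from the root choose two children; at every subsequent vertex $x$ (which already uses one edge to its parent) choose only two of its $d$ children, so the degree of $x$ in the constructed tree is $3$, exactly at the threshold. The key point is that along such a binary subtree the constraint is never an obstruction \emph{provided the edges open in the right temporal order}, namely from the root outward.

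First I would set up the recursive event. Fix $t<1$ and, for a vertex $x$ at level $n$, say $x$ is \emph{good} if the edge $e_x=\langle \overleftarrow{x},x\rangle$ from its parent is open at time $t$ and moreover $U_{e_x}$ is smaller than $U_f$ for the two selected downward edges $f$ from $x$; this ordering guarantees that when each downward edge tries to open, $x$ has degree at most $2$ (its parent edge plus at most... actually at most the parent edge, since the sibling downward edge has not yet opened), so the constraint at $x$ is satisfied on the $x$-side, and symmetrically on the child side. Then I would define $\rho$ to be the probability, in a suitable branching-process sense, that the origin is connected to infinity through the selected binary subtree with all edges respecting this outward ordering, and show $\rho>0$ for $t$ close to $1$. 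Concretely, one compares with a Galton–Watson tree: the relevant per-edge "success" probability is at least $t$ times the probability that a freshly drawn $U$-value exceeds each of a bounded number of already-fixed larger values, and a short computation (or a coupling with supercritical Bernoulli percolation on the binary tree $\mathbb{T}_2$, whose critical parameter is $1/2$) shows this exceeds $1/2$ once $t$ is sufficiently close to $1$. Since a supercritical branching process survives with positive probability, $\theta^{\mathbb{T}^d,(3)}(t)>0$, hence $t_c(\mathbb{T}^d,(3))\leq t<1$.

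The step I expect to be the main obstacle is making the "outward ordering forces the constraint to be respected" argument fully rigorous, because the Harris/Constrained-degree dynamics are genuinely history-dependent: whether edge $e_x$ is \emph{really} open at time $t$ in the constrained model depends on what happened at $\overleftarrow{x}$, which depends on edges outside the selected subtree as well. The clean way around this is to define the favorable event so that it only constrains the $U$-values of edges \emph{in} the selected binary subtree and forces them to open in order of depth; one then checks by induction on the level that, on this event, every selected edge actually opens in the constrained model — at the moment $e_x$ tries to open, the only selected edges incident to either endpoint that have already opened are the at most one parent edge at each endpoint, so both endpoints have constrained-degree at most $2<3$, and unselected incident edges are irrelevant because we may simply not worry about them (they can only have opened earlier, but each endpoint still has at most $2$ selected open edges — wait, an unselected edge that opened would also count). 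To handle this honestly I would additionally include in the favorable event that, for each selected vertex $x$, the $d-2$ unselected downward edges and (for the root) nothing else carry $U$-values larger than $t$, or more efficiently larger than $U_{e_x}$ — this keeps the degree bookkeeping at $x$ under control while costing only a bounded power of a quantity tending to $1$ as $t\to1$, so the branching comparison is unaffected. Verifying that this enlarged event still has the required product/independence structure across distinct vertices of the subtree, and that the resulting offspring mean exceeds $1$, is the computational heart of the argument; everything else is the standard "supercritical Galton–Watson survives" conclusion together with the observation that connection to infinity through a subtree implies $0\leftrightarrow\infty$.
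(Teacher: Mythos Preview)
Your broad plan—locate a binary subtree on which the constraint never bites, then compare with supercritical branching on $\mathbb{T}_2$—is exactly the paper's. The gap is in the execution. Your favorable event (outward ordering plus ``unselected downward edges from $x$ have $U>U_{e_x}$'') does control the degree at the \emph{child} endpoint $x$ when $e_x$ tries, but not at the parent endpoint $\overleftarrow{x}$: the $d-2$ unselected children of $\overleftarrow{x}$ are only forced to satisfy $U>U_{e_{\overleftarrow{x}}}$, so several of them may have $U\in(U_{e_{\overleftarrow{x}}},U_{e_x})$, open first, and fill $\overleftarrow{x}$ to degree $3$ before $e_x$ gets its turn. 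To rule this out with a \emph{pre-chosen} subtree you would need the two selected children of each vertex to carry the two smallest clocks among all $d$ siblings; that costs a factor $\binom{d}{2}^{-1}\leq 1/3$ per vertex for $d\geq 3$, and the offspring mean on the binary tree drops below $1$. Your claim that the extra condition ``costs only a bounded power of a quantity tending to $1$ as $t\to1$'' is also incorrect: the version $U>t$ costs $(1-t)^{d-2}\to0$, while the version $U>U_{e_x}$ is a $t$-independent order-statistic probability—requiring $U_{e_x}$ to be the minimum of the $d+1$ edges at $x$ gives at best $1/(d+1)$ per edge, again subcritical.

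The ingredient you are missing is to choose the subtree \emph{adaptively from the clocks}: at every vertex keep the two children with the smallest $U$-values. This produces an infinite binary subtree $\mathcal{T}$ at zero probabilistic cost, and for any $e=\langle x,x\cdot a\rangle\in\mathcal{T}$ the constraint at $x$ is now automatic (at most one sibling rang earlier, plus possibly the grandparent edge, so degree $\leq 2$). The only remaining requirement—that at most two of the $d$ children of $x\cdot a$ rang before $e$—is what the paper calls $e$ being \emph{red}; this is much weaker than your outward ordering (which would demand zero earlier children), and an explicit order-statistics computation shows $P(e\text{ red})>\tfrac12$, yielding supercritical percolation of red edges on $\mathcal{T}$.
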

\begin{proof}There is nothing to do when $d=2$. Given the sequence of uniform random times $U\in[0,1]^{\E_{d}}$, let $\mathfrak{F}=\mathfrak{F}(U)$ be the random forest  $\mathfrak{F}=(\V_d, \E (\mathfrak{F}))$ where
\begin{align*}
\E (\mathfrak{F}) = \{e=\langle x,x\cdot a\rangle\in\E_d; \#\{b\in [d]\backslash\{a\}; U(e)>U(\langle x,x\cdot b\rangle)\}\leq 1\}.
\end{align*}

In words, $e\in\E (\mathfrak{F})$ if $e$ has at most one brother whose clocks ring before $U(e)$. Note that $\mathfrak{F}$ is a collection of binary trees, let ${\cal T}$ be the one that contains the origin.

Given any bond $e=\langle x,x\cdot a\rangle\in \E({\cal T})$, we declare this bond $e$ as {\em red} if
\begin{align*}
\#\{b\in [d]; U(e)>U(\langle x\cdot a,x\cdot a\cdot b\rangle)\}\leq 2.
\end{align*}

That is, $e\in\E ({\cal T})$ is red if and only if $e$ has at most two sons in $\T_d$ whose clocks ring before $U(e)$. Note that if $e$ is red then $e$ will open at time $U_e$ in the Constrained-degree percolation model with $k=3$. Thus to conclude this proof, it is enough to prove that there is percolation of red bonds on ${\cal T}$.

Let $e_n^{(1)}=\langle x,x\cdot a\rangle , e_n^{(2)}=\langle x,x\cdot b\rangle\in\E ({\cal T})$ be the bonds of $n$-th generation of ${\cal T}$, i.e. $x\in [d]^{n-1}$ and $a,b\in[d]$, with $U(e_1^n)<U(e_2^n)$.

Given $e\in\E ({\cal T})$, it holds that 
\begin{align}\label{arvred}
\mathbb{P}(e \mbox{ is red})=\mathbb{P}(e \mbox{ is red},e=e_n^{(1)})+\mathbb{P}(e \mbox{ is red},e=e_n^{(2)}) \nonumber\\
=\frac{1}{2}\mathbb{P}(e \mbox{ is red}|e=e_n^{(1)}) + \frac{1}{2}\mathbb{P}(e \mbox{ is red}|e=e_n^{(2)})
\end{align}

Now, observe that 
\begin{equation}\nonumber \begin{array}{l}\mathbb{P}( e \mbox{ is red}|e=e_n^{(1)})=d\left[\frac{1}{2d}+\frac{d(2d-2)!}{(2d)!}+\frac{d(d-1)(2d-3)!}{(2d)!}\right],\\
\mathbb{P}(e \mbox{ is red}|e=e_n^{(2)})=d\left[\frac{(d-1)(2d-2)!}{(2d)!}+\frac{2d(d-1)(2d-3)!}{(2d)!}\right.\\
\left.+\frac{3d(d-1)^2(2d-4)!}{(2d)!}\right].\end{array}
\end{equation}

Combining the equations above with (\ref{arvred}), a simple calculation shows that $\mathbb{P}(e \mbox{ is red})>\frac{1}{2}$. Since ${\cal T}$ is a binary tree, the red edges have the distribution of a supercritical branching processes. This yields percolation of red bonds on ${\cal T}$.
\end{proof}

\section*{Acknowledgements}
The research of B.N.B.L. and R.S. are partially supported by CAPES, FAPEMIG (Programa Pesquisador Mineiro) and CNPq. D.C.S. and R.T would like to thank CAPES and CNPq, respectively.

\end{document}